\documentclass[12pt,english,a4paper]{smfart}

\usepackage[T1]{fontenc}
\usepackage{lmodern}
\usepackage{smfthm}
\usepackage[headings]{fullpage}
\usepackage{amssymb}

\newcommand{\im}{\operatorname{im}}

\newcommand{\Fil}{\operatorname{Fil}}
\newcommand{\bfont}{\mathbf{B}}
\newcommand{\bdr}{\mathbf{B}_{\mathrm{dR}}}

\newcommand{\Gal}{\operatorname{Gal}}

\newcommand{\Kbar}{\overline{K}}
\newcommand{\Cp}{\mathbf{C}}
\newcommand{\Zp}{\mathbf{Z}_p}
\newcommand{\ZZ}{\mathbf{Z}}

\newcommand{\OO}{\mathcal{O}}

\newcommand{\frD}{\mathfrak{d}}
\newcommand{\Ann}{\operatorname{Ann}}

\newcommand{\val}{\operatorname{val}}
\newcommand{\vp}{\val_p}
\newcommand{\Tr}{\operatorname{Tr}}
\newcommand{\Nm}{\operatorname{N}}

\newcommand{\smallo}{\operatorname{o}}

\renewcommand{\geq}{\geqslant}
\renewcommand{\leq}{\leqslant} 
\renewcommand{\phi}{\varphi} 
\renewcommand{\hat}{\widehat}

\author{Laurent Berger}
\address{UMPA de l'ENS de Lyon \\
UMR 5669 du CNRS}
\email{laurent.berger@ens-lyon.fr}
\urladdr{perso.ens-lyon.fr/laurent.berger/}

\title{K\"ahler differentials and $\Zp$-extensions}

\date{April 19, 2023}

\begin{document}

\begin{abstract}
Let $K$ be a $p$-adic field, and let $K_\infty/K$ be a Galois extension that is almost totally ramified, and whose Galois group is a $p$-adic Lie group of dimension $1$. We prove that $K_\infty$ is not dense in $(\mathbf{B}_{\mathrm{dR}}^+ / \operatorname{Fil}^2 \mathbf{B}_{\mathrm{dR}}^+ )^{\operatorname{Gal}(\overline{K}/K_\infty)}$. Moreover, the restriction of $\theta$ to the closure of $K_\infty$ is injective, and its image via $\theta$ is the set of vectors of $\widehat{K}_\infty$ that are $C^1$ with zero derivative for the action of $\operatorname{Gal}(K_\infty/K)$. The main ingredient for proving these results is the construction of an explicit lattice of $\mathcal{O}_{K_\infty}$ that is commensurable with $\mathcal{O}_{K_\infty}^{d=0}$, where $d : \mathcal{O}_{K_\infty} \to \Omega_{\mathcal{O}_{K_\infty} / \mathcal{O}_K}$ is the differential.
\end{abstract}



\maketitle

\tableofcontents

\setlength{\baselineskip}{18pt}

\section*{Introduction}

Let $K$ be a $p$-adic field, namely a finite extension of $W(k)[1/p]$ where $k$ is a perfect field of characteristic $p$. 
Let $\Cp$ be the $p$-adic completion of an algebraic closure $\Kbar$ of $K$. Let $K_\infty/K$ be a Galois extension that is almost totally ramified, and whose Galois group is a $p$-adic Lie group of dimension $1$. Let $\bdr(\hat{K}_\infty) = \bdr(\Cp)^{\Gal(\Kbar/K_\infty)}$ be Fontaine's field of periods attached to $K_\infty/K$, and for $n \geq 1$, let $\bfont_n(\hat{K}_\infty) = \bdr^+(\hat{K}_\infty)/\Fil^n \bdr^+(\hat{K}_\infty)$. 

This note is motivated by Ponsinet's paper \cite{P20}, in which he relates the study of universal norms for the extension $K_\infty/K$ to the question of whether $K_\infty$ is dense in $\bfont_n(\hat{K}_\infty)$ for $n \geq 1$. The density result holds for $n=1$ by the Ax-Sen-Tate theorem. Our main result is the following.

\begin{enonce*}{Theorem A}
The field $K_\infty$ is not dense in $\bfont_2(\hat{K}_\infty)$.
\end{enonce*}

By the constructions of Fontaine and Colmez (see \cite{F94} and \cite{C12}), $\bfont_2(\Cp)$ is the completion of $\Kbar$ for a topology defined using the  K\"ahler differentials $\Omega_{\OO_{\Kbar} / \OO_K}$. Some partial results towards theorem A have been proved by Iovita-Zaharescu in \cite{IZ99}, by studying these K\"ahler differentials. Let $\Omega_{\OO_{K_\infty} / \OO_K}$ be the K\"ahler differentials of $\OO_{K_\infty} / \OO_K$ and let $d : \OO_{K_\infty} \to \Omega_{\OO_{K_\infty} / \OO_K}$ be the differential. Our main technical result is the construction of a lattice of $\OO_{K_\infty}$ that is commensurable with $\OO_{K_\infty}^{d=0}$. Since the inertia subgroup of $\Gal(K_\infty/K)$ is a $p$-adic Lie group of dimension $1$, there exists a finite subextension $K_0/K$ of $K_\infty$ such that $K_\infty/K_0$ is a totally ramified $\Zp$-extension. Let $K_n$ be the $n$-th layer of this $\Zp$-extension. 

\begin{enonce*}{Theorem B}
The lattices $\sum_{n \geq 0} p^n \OO_{K_n}$ and $\OO_{K_\infty}^{d=0}$ are commensurable.
\end{enonce*}

In order to prove this, we use Tate's results on ramification in $\Zp$-extensions. As a corollary of theorem B, we can  say more about the completion of $K_\infty$ in $\bfont_2(\hat{K}_\infty)$. The field $\hat{K}_\infty$ is a Banach representation of the $p$-adic Lie group $\Gal(K_\infty/K)$. Let $\theta : \bfont_2(\Cp) \to \Cp$ be the usual map from $p$-adic Hodge theory.

\begin{enonce*}{Theorem C}
The completion of $K_\infty$ in $\bfont_2(\hat{K}_\infty)$ is isomorphic via $\theta$ to the set of vectors of $\hat{K}_\infty$ that are $C^1$ with zero derivative for the action of $\Gal(K_\infty/K)$.

This is a field, and it is also the set of $y \in \hat{K}_\infty$ that can be written as $y=\sum_{n \geq 0} p^n y_n$ with $y_n \in K_n$ and $y_n \to 0$.
\end{enonce*}

We also prove that $d(\OO_{K_\infty})$ contains no nontrivial $p$-divisible element (coro \ref{nopdiv}), and that $d :  \OO_{K_\infty} \to \Omega_{\OO_{K_\infty} / \OO_K}$ is not surjective (coro \ref{nosurj}). These two statements are equivalent to theorem A by the results of \cite{IZ99}; using our computations, we give a short independent proof.

\vspace{15pt}
\noindent\textbf{Acknowledgements.}
I thank L\'eo Poyeton for his remarks on an earlier version of this note.

\section{K\"ahler differentials}
\label{kdifsec}

Let $K$ be a $p$-adic field. If $L/K$ is a finite extension, let $\frD_{L/K} \subset \OO_L$ denote its different. 

\begin{prop}
\label{fontrec}
Let $K$ be a $p$-adic field, and let $L/K$ be an algebraic extension.
\begin{enumerate}
\item If $L/K$ is a finite extension, then $\Omega_{\OO_L / \OO_K} = \OO_L / \frD_{L/K}$ as $\OO_L$-modules.
\item If $M/L/K$ are finite extensions, the map $\Omega_{\OO_L / \OO_K} \to \Omega_{\OO_M / \OO_K}$ is injective.
\item If $L/K$ is an algebraic extension, and $\omega_1,\omega_2 \in \Omega_{\OO_L / \OO_K}$, then there exists $x \in \OO_L$ such that $\omega_2 = x \omega_1$ if and only if $\Ann(\omega_1) \subset \Ann(\omega_2)$.
\end{enumerate}
\end{prop}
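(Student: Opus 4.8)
The plan is to prove the three parts in order, each time reducing to standard facts about the different.

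For (1), I would use that $\OO_L$ is monogenic over $\OO_K$ --- valid since $k$ is perfect and $L/K$ is separable (as $K$ has characteristic $0$) --- and write $\OO_L = \OO_K[\alpha] = \OO_K[X]/(f(X))$ with $f$ the minimal polynomial of $\alpha$ over $K$. Then $\Omega_{\OO_L/\OO_K}$ is the cyclic $\OO_L$-module generated by $d\alpha$ with the single relation $f'(\alpha)\,d\alpha = 0$, i.e.\ $\Omega_{\OO_L/\OO_K} \cong \OO_L/(f'(\alpha))$; since $\frD_{L/K} = (f'(\alpha))$ by the standard description of the different via a monogenic generator, this gives the claim (see e.g.\ Serre, \emph{Local Fields}, chap.~III).

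For (2), I would factor the map as $\Omega_{\OO_L/\OO_K} \to \Omega_{\OO_L/\OO_K}\otimes_{\OO_L}\OO_M \to \Omega_{\OO_M/\OO_K}$ and show each arrow is injective. By part (1) the first arrow is $\OO_L/\frD_{L/K} \to \OO_M/\frD_{L/K}\OO_M$, which is injective because $\frD_{L/K}\OO_M \cap \OO_L = \frD_{L/K}$; this last identity is a short valuation computation, using that a uniformizer of $L$ has $M$-valuation $e(M/L)$. For the second arrow, I would use the fundamental exact sequence $\Omega_{\OO_L/\OO_K}\otimes_{\OO_L}\OO_M \to \Omega_{\OO_M/\OO_K} \to \Omega_{\OO_M/\OO_L} \to 0$: its image is the kernel of $\OO_M/\frD_{M/K} \to \OO_M/\frD_{M/L}$, namely $\frD_{M/L}/\frD_{M/K}$, and by the tower formula $\frD_{M/K} = \frD_{M/L}\cdot(\frD_{L/K}\OO_M)$ this $\OO_M$-module has the same finite length as $\OO_M/\frD_{L/K}\OO_M \cong \Omega_{\OO_L/\OO_K}\otimes_{\OO_L}\OO_M$; a surjection of finite $\OO_M$-modules of equal length is an isomorphism, so the second arrow is injective too.

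For (3), the ``only if'' direction is immediate, so suppose $\Ann(\omega_1) \subset \Ann(\omega_2)$. Since by part (2) the transition maps are injective, $\Omega_{\OO_L/\OO_K} = \bigcup_{L'} \Omega_{\OO_{L'}/\OO_K}$ over the finite subextensions $L'/K$, and so $\omega_1, \omega_2 \in \Omega_{\OO_{L'}/\OO_K}$ for a single finite $L'$. One checks $\Ann_{\OO_{L'}}(\omega_i) = \Ann_{\OO_L}(\omega_i) \cap \OO_{L'}$ (again using the injectivity in (2)), so intersecting the hypothesis with $\OO_{L'}$ gives $\Ann_{\OO_{L'}}(\omega_1) \subset \Ann_{\OO_{L'}}(\omega_2)$. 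Now $\Omega_{\OO_{L'}/\OO_K} \cong \OO_{L'}/\frD_{L'/K}$ is a cyclic torsion module over the discrete valuation ring $\OO_{L'}$, hence uniserial; writing each $\omega_i$ as a unit times a power $\pi_{L'}^{a_i}$ of a uniformizer modulo $\frD_{L'/K}$, the annihilator inclusion forces $a_1 \leq a_2$, and then $x = \pi_{L'}^{a_2-a_1}\times(\text{unit}) \in \OO_{L'} \subset \OO_L$ satisfies $\omega_2 = x\omega_1$ (the cases $\omega_1 = 0$ or $\omega_2 = 0$ being trivial).

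There is no single hard step here; the points requiring care are the bookkeeping with differents and valuations in (2), and, in (3), the fact that the annihilator hypothesis genuinely descends to a finite layer $L'$ --- which is exactly why the injectivity of (2) must be established first.
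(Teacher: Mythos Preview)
Your argument is correct in all three parts. The paper itself gives no proof beyond the reference ``See for instance \S 2 of \cite{F82}'', so there is no approach to compare against; what you have written is precisely the standard argument one finds in Fontaine's paper or in Serre's \emph{Local Fields}, and it is carried out cleanly. The only places worth a second glance are exactly the ones you flagged: in (2), the length count hinges on the tower formula $\frD_{M/K}=\frD_{M/L}\cdot(\frD_{L/K}\OO_M)$ together with the fact that one may choose the \emph{same} monogenic generator $\beta$ for $\OO_M$ over both $\OO_K$ and $\OO_L$ (so that the map $\Omega_{\OO_M/\OO_K}\to\Omega_{\OO_M/\OO_L}$ really is the natural surjection under your identifications); and in (3), the descent of the annihilator condition to a finite layer does rely on the injectivity from (2), as you note.
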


\begin{proof}
See for instance \S 2 of \cite{F82}.
\end{proof}

Recall (see \S 2 of \cite{CG96}) that an algebraic extension $L/K$ is deeply ramified if the set $\{ \vp(\frD_{F/K}) \}_F$ is unbounded, as $F$ runs through the set of finite extensions of $K$ contained in $L$. Alternatively (remark 3.3 of \cite{S12}), $L/K$ is deeply ramified if and only if $\hat{L}$ is a perfectoid field. An extension $K_\infty/K$ as in the introduction is deeply ramified.

\begin{coro}
\label{deepramdiff}
If $L/K$ is deeply ramified, then $\Omega_{\OO_L / \OO_K} = L/\OO_L$ as $\OO_L$-modules
\end{coro}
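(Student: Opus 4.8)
The plan is to present $\Omega_{\OO_L/\OO_K}$ as an increasing union of cyclic $\OO_L$-modules, to pin down the transition maps in that union, and then to recognise the answer as $L/\OO_L$ by structure theory over the valuation ring $\OO_L$. Combining parts (1) and (2) of Proposition \ref{fontrec}, $\Omega_{\OO_L/\OO_K}$ is the increasing union of its submodules $\Omega_{\OO_F/\OO_K} = \OO_F/\frD_{F/K}$, where $F$ runs over the finite subextensions of $L/K$. For each such $F$ fix $\alpha_F$ with $\OO_F = \OO_K[\alpha_F]$ (possible since $k$ is perfect); then $\Omega_{\OO_F/\OO_K}$ is generated by $d\alpha_F$, with $\Ann_{\OO_F}(d\alpha_F) = \frD_{F/K}$, and for $F \subseteq F'$, writing $\alpha_F = Q(\alpha_{F'})$, the transition map sends $d\alpha_F$ to $Q'(\alpha_{F'})\,d\alpha_{F'}$.

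The key claim is that $\Ann_{\OO_L}(d\alpha_F) = \frD_{F/K}\OO_L$, and not merely $\supseteq$; equivalently, $\vp(Q'(\alpha_{F'})) = \vp(\frD_{F'/K}) - \vp(\frD_{F/K}) = \vp(\frD_{F'/F})$ for all $F \subseteq F'$. I would derive this (for $F \neq K$; the case $F = K$ being trivial) from the conormal exact sequence $\OO_{F'} \otimes_{\OO_F} \Omega_{\OO_F/\OO_K} \to \Omega_{\OO_{F'}/\OO_K} \to \Omega_{\OO_{F'}/\OO_F} \to 0$. By Proposition \ref{fontrec}(1) this reads $\OO_{F'}/\frD_{F/K}\OO_{F'} \to \OO_{F'}/\frD_{F'/K} \to \OO_{F'}/\frD_{F'/F} \to 0$, and the first arrow is multiplication by $Q'(\alpha_{F'})$; hence its cokernel is $\OO_{F'}/J$ with $\vp(J) = \min(\vp(Q'(\alpha_{F'})), \vp(\frD_{F'/K}))$, and comparing with $\OO_{F'}/\frD_{F'/F}$ gives $\min(\vp(Q'(\alpha_{F'})), \vp(\frD_{F'/K})) = \vp(\frD_{F'/F})$. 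Since $\vp(\frD_{F'/F}) < \vp(\frD_{F'/K})$ by the tower formula $\frD_{F'/K} = \frD_{F'/F}\cdot\frD_{F/K}\OO_{F'}$ (using $\vp(\frD_{F/K}) > 0$), the minimum is $\vp(Q'(\alpha_{F'}))$, which settles the claim. The inclusion $\frD_{F/K}\OO_L \subseteq \Ann_{\OO_L}(d\alpha_F)$ is clear, and the reverse follows by testing $\lambda\,d\alpha_F = 0$ at a finite level containing $\lambda$.

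Next, $M := \Omega_{\OO_L/\OO_K}$ is a torsion $\OO_L$-module which is, crucially, divisible — this is the other place deep ramification enters. Given $\omega \in \Omega_{\OO_F/\OO_K}$ and $c \in \OO_L \setminus \{0\}$, deep ramification provides a finite $F' \supseteq F$ with $\vp(\frD_{F'/K}) \geq \vp(\frD_{F/K}) + \vp(c)$; writing $\omega = y\,d\alpha_{F'}$ in $\Omega_{\OO_{F'}/\OO_K} = \OO_{F'}/\frD_{F'/K}$ and using that $\omega$ is still killed by $\frD_{F/K}$ there, one gets $\vp(y) \geq \vp(\frD_{F'/K}) - \vp(\frD_{F/K}) \geq \vp(c)$, so $\omega = c\cdot(y/c)\,d\alpha_{F'}$ with $y/c \in \OO_L$. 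It follows that for every $c \in \OO_L \setminus \{0\}$ the submodule $M[c] = \{m \in M : cm = 0\}$ is cyclic, isomorphic to $\OO_L/c\OO_L$: picking $F$ with $\vp(\frD_{F/K}) > \vp(c)$ and $a \in \OO_L$ with $\vp(a) = \vp(\frD_{F/K}) - \vp(c)$, the key claim gives $\Ann_{\OO_L}(a\,d\alpha_F) = c\OO_L$, and any $m$ with $cm = 0$ is an $\OO_L$-multiple of $a\,d\alpha_F$ by Proposition \ref{fontrec}(3); being torsion, $M$ is the increasing union of the $M[c]$ over $c$ with $\vp(c) \to \infty$.

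Finally, $L/\OO_L$ satisfies the same description — it is torsion, divisible, with $c$-torsion $c^{-1}\OO_L/\OO_L \cong \OO_L/c\OO_L$, and is the increasing union of these — so $M \cong L/\OO_L$ by the structure theory of divisible torsion $\OO_L$-modules with cyclic $c$-torsion: one builds the isomorphism by transfinite induction along a cofinal well-ordered chain of cyclic submodules (such a chain exists because, by the key claim and Proposition \ref{fontrec}(3), the cyclic submodules of $M$ are totally ordered), extending isomorphisms $M[c] \cong c^{-1}\OO_L/\OO_L$ at successor stages — possible because units of $\OO_L/c\OO_L$ lift to units of $\OO_L$, as $\OO_L$ is local. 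I expect the main obstacle to be the key claim above: upgrading $\frD_{F/K}\OO_L \subseteq \Ann_{\OO_L}(d\alpha_F)$ to an equality, i.e.\ controlling the valuation of the transition maps through the tower formula for the different. Divisibility, the other ingredient from deep ramification, is routine once the colimit description is in place, and the final identification with $L/\OO_L$ is then formal.
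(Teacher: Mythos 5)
Your argument is correct and is essentially the proof the paper has in mind (the standard derivation from Proposition \ref{fontrec}, going back to \S 2 of \cite{F82}): realize $\Omega_{\OO_L/\OO_K}$ as the increasing union of the cyclic modules $\OO_L\, d\alpha_F \cong \OO_L/\frD_{F/K}\OO_L$, and let deep ramification push the annihilators to zero, identifying the union with $L/\OO_L$. One small fix: in your key claim the case distinction should be on whether $\frD_{F/K}=\OO_F$ rather than on whether $F=K$ --- for $F/K$ unramified with $F\neq K$ the inequality $\vp(\frD_{F'/F})<\vp(\frD_{F'/K})$ you invoke fails, but then $d\alpha_F=0$ and the claim is vacuous, so nothing is lost (one may simply work with the cofinal family of $F$ with $\frD_{F/K}\neq\OO_F$); also, since $L/K$ has only countably many finite subextensions, an ordinary induction along a cofinal sequence suffices in place of the transfinite argument at the end.
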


\begin{prop}
\label{equivtheo}
If $L/K$ is deeply ramified, then $d : \OO_L  \to \Omega_{\OO_L / \OO_K}$ is surjective if and only if $d (\OO_L)$ is $p$-divisible.
\end{prop}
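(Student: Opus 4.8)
The \emph{only if} direction is immediate: if $d$ is surjective then $d(\OO_L) = \Omega_{\OO_L/\OO_K} = L/\OO_L$ by corollary \ref{deepramdiff}, and $L/\OO_L$ is a divisible abelian group, hence $p$-divisible.

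For the \emph{if} direction, assume $M := d(\OO_L)$ is $p$-divisible, and write $\Omega := \Omega_{\OO_L/\OO_K} = L/\OO_L$ using corollary \ref{deepramdiff}. The goal is to deduce $M = \Omega$. Since $d$ is additive, $M$ is a subgroup of $\Omega$, and by the universal property of Kähler differentials the $\OO_L$-submodule it generates is all of $\Omega$; so it is enough to show that $M$ is in fact an $\OO_L$-submodule. I would reformulate this once more, and work with the following: it suffices to prove that $\Omega[p] = p^{-1}\OO_L/\OO_L$ is contained in $M$. Indeed, set $Q := \Omega/M$. Because $\Omega$ is divisible, $p\Omega = \Omega$, so $Q$ is automatically $p$-divisible; and the snake lemma applied to multiplication by $p$ on $0 \to M \to \Omega \to Q \to 0$, using $M/pM = 0$, identifies $Q[p]$ with $\Omega[p]/M[p]$. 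Thus if $\Omega[p] \subseteq M$ then $Q[p] = 0$, so multiplication by $p$ is bijective on $Q$; but $Q$ is a quotient of $L/\OO_L$, hence every element of it is killed by a power of $p$, and therefore $Q = 0$.

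It remains to prove $\Omega[p] \subseteq M$, and this is where I expect the real work and the main obstacle. One descends to finite levels: an element $\omega \in \Omega[p]$ already lies in $\Omega_{\OO_F/\OO_K}$ for some finite subextension $F/K$ of $L$, and in fact in $\Omega_{\OO_F/\OO_K}[p]$ since the transition maps are injective (proposition \ref{fontrec}(2)). Using proposition \ref{fontrec}(1) and the fact that $\OO_F = \OO_K[\pi_F]$ for a suitable generator $\pi_F$ (the residue field being perfect, the residue extensions are separable), I would make the inclusion $d(\OO_F) \subseteq \Omega_{\OO_F/\OO_K} = \OO_F/\frD_{F/K}$ fully explicit: under the identification $d\pi_F \leftrightarrow 1$ one gets $d(\OO_F) = \sum_{m \geq 0} (m+1)\,\OO_K\cdot\overline{\pi_F^m}$, so that the only obstruction to $d(\OO_F)$ exhausting $\Omega_{\OO_F/\OO_K}$ comes from the $p$-adic valuations $\vp(m+1)$. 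The point is then that the $p$-divisibility of $M$, which is a condition on the \emph{union} $M = \bigcup_F d(\OO_F)$ rather than on any single $d(\OO_F)$, is exactly the extra input that absorbs these denominators after passing to a sufficiently large layer $F'$ — legitimate since $L/F$ is again deeply ramified, so $\vp(\frD_{F'/K}) \to \infty$, and since the non-$p$-part of $(m+1)$ is already a unit in $\OO_K$. The delicate and genuinely nontrivial part is that this last absorption cannot be done in one stroke: dividing an element of $M$ by $p$ produces an error lying in $\Omega[p^2]$ (and more generally in $\Omega[p^v]$), so the argument must be organized as an induction — on $\vp(m+1)$, or equivalently on the layer — in which the growth of the different $\vp(\frD_{F'/K})$ is played off against the powers of $p$ that must be removed. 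Everything outside this limiting argument is formal.
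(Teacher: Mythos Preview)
Your snake-lemma reduction to $\Omega[p] \subseteq M$ is correct but a detour, and the part you flag as ``the real work'' is where the proposal remains incomplete: you sketch an induction on finite layers that is never carried out, and whose mechanism (``absorbing denominators'', ``errors in $\Omega[p^v]$'') stays vague. The paper's argument for the \emph{if} direction is in fact three lines and bypasses all of this, working directly with an arbitrary $\omega \in \Omega$ rather than only $\omega \in \Omega[p]$.

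Here is the idea you are missing. From $p$-divisibility of $M$, fix once and for all a sequence $\alpha_i \in \OO_L$ with $d\alpha_1 \neq 0$ and $d\alpha_i = p \cdot d\alpha_{i+1}$; then $\Ann(d\alpha_i)$ shrinks to $0$ as $i \to \infty$. Given any $\omega \in \Omega$, proposition~\ref{fontrec}(3) --- the annihilator criterion, which you never invoke --- yields $\omega = x_i \cdot d\alpha_i$ for some $i$ and some $x_i \in \OO_L$. Now comes the one trick: since $x_i$ lies in a finite layer, $dx_i$ is killed by a power of $p$, so $d(p^k x_i) = 0$ for $k \gg 0$. Then
\[
\omega \;=\; x_i \cdot d\alpha_i \;=\; p^k x_i \cdot d\alpha_{i+k} \;=\; d\bigl(p^k x_i \,\alpha_{i+k}\bigr),
\]
the last equality by the Leibniz rule and $d(p^k x_i)=0$. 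Hence $d$ is surjective. The point is that $M$, while only an additive subgroup a priori, absorbs the $\OO_L$-multiplication after pushing along the $p$-divisible chain far enough that the unwanted Leibniz term $\alpha_{i+k}\cdot d(p^k x_i)$ vanishes. Your explicit coordinates $\pi_F^m$ and the growth of $\vp(\frD_{F'/K})$ are not needed.
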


\begin{proof}
Since $L/K$ is deeply ramified, $\Omega_{\OO_L / \OO_K}$ is $p$-divisible. This proves one implication. Assume now that $d (\OO_L)$ is $p$-divisible, so that there exists a sequence $\{\alpha_i \}_{i \geq 1}$ of $\OO_L$ such that $d \alpha_1 \neq 0$ and $d \alpha_i = p \cdot d \alpha_{i+1}$ for all $i \geq 1$. If $\omega \in \Omega_{\OO_L / \OO_K}$, prop \ref{fontrec} implies that there exists $i \geq 1$ and $x_i \in \OO_L$ such that $\omega = x_i \cdot d \alpha_i$. Take $k \geq 0$ such that $d(p^k x_i)=0$. We then have $\omega = x_i \cdot d \alpha_i = p^k x_i \cdot d \alpha_{i+k} = d( p^k x_i \alpha_{i+k})$. 
Hence $d$ is surjective.
\end{proof}

\begin{prop}
\label{subunif}
Let $L/K$ be a deeply ramified extension, and let $K' \subset L$ be a finite extension of $K$.
\begin{enumerate}
\item $d : \OO_L  \to \Omega_{\OO_L / \OO_K}$ is surjective if and only if $d' : \OO_L  \to \Omega_{\OO_L / \OO_{K'}}$ is surjective.
\item $\OO_L^{d=0}$ and $\OO_L^{d'=0}$ are commensurable.
\end{enumerate}
\end{prop}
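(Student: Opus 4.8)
The plan is to relate the two differentials via the standard transitivity (conormal) sequence for $\OO_K \to \OO_{K'} \to \OO_L$, namely $\Omega_{\OO_{K'}/\OO_K} \otimes_{\OO_{K'}} \OO_L \to \Omega_{\OO_L/\OO_K} \to \Omega_{\OO_L/\OO_{K'}} \to 0$. Write $d : \OO_L \to \Omega_{\OO_L/\OO_K}$ and $d' : \OO_L \to \Omega_{\OO_L/\OO_{K'}}$, and let $\pi : \Omega_{\OO_L/\OO_K} \to \Omega_{\OO_L/\OO_{K'}}$ be the natural surjection, so $d' = \pi \circ d$. The key point is that the kernel of $\pi$, which is the image of $\Omega_{\OO_{K'}/\OO_K}\otimes_{\OO_{K'}}\OO_L$, is killed by $\frD_{K'/K}$, hence is a \emph{bounded} $p$-torsion submodule of $\Omega_{\OO_L/\OO_K}$: since $K'/K$ is finite, $\frD_{K'/K}$ contains $p^N \OO_{K'}$ for some $N$, so $p^N \cdot \ker \pi = 0$. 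This is the only nontrivial input; everything else is formal.

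For (1): if $d$ is surjective then $d' = \pi \circ d$ is surjective since $\pi$ is. Conversely, suppose $d'$ is surjective. Given $\omega \in \Omega_{\OO_L/\OO_K}$, pick $x \in \OO_L$ with $d'(x) = \pi(\omega)$; then $\omega - d(x) \in \ker \pi$, so $p^N(\omega - d(x)) = 0$. By Proposition \ref{equivtheo} it suffices to show $d(\OO_L)$ is $p$-divisible, or alternatively one argues directly: since $L/K$ is deeply ramified, Proposition \ref{equivtheo} reduces surjectivity of $d$ to $p$-divisibility of $d(\OO_L)$; and $d(\OO_L) = \pi^{-1}$-preimage considerations together with $p$-divisibility of $d'(\OO_L)$ (which holds as $d'$ is surjective and $\Omega_{\OO_L/\OO_{K'}}$ is $p$-divisible by Corollary \ref{deepramdiff} applied to $L/K'$, noting $L/K'$ is also deeply ramified) let us lift. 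Concretely, given $d(y)$, write $\pi(d(y)) = d'(y) = p^N \cdot d'(z)$ for some $z$; then $d(y) - p^N d(z) \in \ker\pi$ is killed by $p^N$, so $p^N(d(y) - p^N d(z)) = 0$, giving $d(y) = p^N d(z) + (\text{$p^N$-torsion})$. Iterating, one shows every element of $d(\OO_L)$ lies in $p\cdot d(\OO_L)$ after absorbing bounded torsion — here I would be slightly careful and instead argue: $\Omega_{\OO_L/\OO_K}$ is itself $p$-divisible by Corollary \ref{deepramdiff}, so the image $d(\OO_L)$ is $p$-divisible as soon as it has finite index, and surjectivity of $\pi$ plus $p^N \ker\pi = 0$ forces $p^N \Omega_{\OO_L/\OO_K} \subseteq d(\OO_L) + p^N\cdot 0$...

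In fact the cleanest route for (1) is: $d'$ surjective $\iff$ $d'(\OO_L)$ is $p$-divisible $\iff$ (since $p^N\ker\pi=0$ and $\pi$ surjective) $d(\OO_L)$ is $p$-divisible $\iff$ $d$ surjective, where the middle equivalence uses that a submodule $M \subseteq \Omega_{\OO_L/\OO_K}$ with $\pi(M)$ $p$-divisible and $p^N\ker\pi = 0$ satisfies: $M$ is $p$-divisible iff $M \supseteq \ker\pi \cap (\text{something})$... this requires that $\ker\pi \subseteq d(\OO_L)$, which follows because $\ker\pi$ is generated by $d(a)$ for $a \in \OO_{K'}$ (the images of $da$, $a\in\OO_{K'}$, under $\OO_{K'}\to\OO_L$). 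That last observation — $\ker\pi = \sum_{a\in\OO_{K'}}\OO_L\cdot d(a) \subseteq d(\OO_L)$ modulo the bounded-torsion subtlety — is what makes both parts go through: for (2), $\OO_L^{d'=0} = (\pi\circ d)^{-1}(0) = d^{-1}(\ker\pi)$ contains $\OO_L^{d=0}$, and conversely if $d(x) \in \ker\pi$ then $p^N d(x) = 0$, i.e. $d(p^N x) = 0$, so $p^N \cdot \OO_L^{d'=0} \subseteq \OO_L^{d=0}$, giving commensurability. The main obstacle is bookkeeping the bounded $p$-torsion in $\ker\pi$ cleanly; I expect part (2) to be entirely routine and part (1) to reduce to part (2) together with Propositions \ref{equivtheo} and \ref{fontrec}.
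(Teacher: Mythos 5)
Your setup is exactly the paper's: the transitivity sequence for $\OO_K \to \OO_{K'} \to \OO_L$ together with the observation that its first term, hence $\ker\pi$, is killed by $p^N$ because $K'/K$ is finite. Your part (2) is correct and is the paper's argument essentially verbatim: $\OO_L^{d=0}\subset\OO_L^{d'=0}$ since $d'=\pi\circ d$, and $p^N\cdot\OO_L^{d'=0}\subset\OO_L^{d=0}$ because $d(x)\in\ker\pi$ forces $d(p^Nx)=p^N\,d(x)=0$.

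Part (1), converse direction, is where you have a genuine gap: none of your three sketches is brought to a conclusion. Your first attempt (pick $x$ with $d'(x)=\pi(\omega)$, deduce $p^N(\omega-d(x))=0$) yields only $p^N\omega\in d(\OO_L)$, i.e.\ $p^N\,\Omega_{\OO_L/\OO_K}\subseteq d(\OO_L)$, which is strictly weaker than surjectivity. Your second attempt produces $d(y)=p^N d(z)+t$ with $t=d(y-p^Nz)$ a $p^N$-torsion element; this does not show $d(\OO_L)$ is $p$-divisible, since the torsion term need not itself be divisible, and you rightly hesitate there. The claim $\ker\pi\subseteq d(\OO_L)$ underlying your third attempt is unjustified: $\ker\pi$ is the $\OO_L$-\emph{submodule} generated by the $d(a)$ for $a\in\OO_{K'}$, and an $\OO_L$-linear combination $\sum x_i\,d(a_i)$ has no reason to be exact. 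The missing move --- and it is the whole content of the paper's proof --- is to use the $p$-divisibility of the ambient module \emph{first}: by Corollary \ref{deepramdiff}, $\Omega_{\OO_L/\OO_K}\cong L/\OO_L$ is $p$-divisible, so write $\omega=p^N\omega'$, then run your first attempt on $\omega'$ to get $\alpha\in\OO_L$ with $\omega'-d(\alpha)\in\ker\pi$, and multiply by $p^N$ to kill the error: $\omega=p^N\omega'=d(p^N\alpha)$. With that one reordering your first attempt becomes a complete proof, and no appeal to Proposition \ref{equivtheo} is needed.
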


\begin{proof}
We have an exact sequence of $\OO_L$-modules, compatible with $d$ and $d'$
\[ \OO_L \otimes \Omega_{\OO_{K'} / \OO_K} \xrightarrow{f} \Omega_{\OO_L / \OO_K} \xrightarrow{g} \Omega_{\OO_L / \OO_{K'}} \to 0. \]
Let us prove (1). If $d : \OO_L  \to \Omega_{\OO_L / \OO_K}$ is surjective, then clearly $d' : \OO_L  \to \Omega_{\OO_L / \OO_{K'}}$ is surjective. Conversely, there exists $r \geq 0$ such that $p^r \cdot \Omega_{\OO_{K'} / \OO_K} = \{0\}$. If $\omega \in \Omega_{\OO_L / \OO_K}$, write it as $\omega = p^r \omega_r$. By hypothesis, there exists $\alpha_r \in \OO_L$ such that $\omega_r = d' \alpha_r$ in $\Omega_{\OO_L / \OO_{K'}}$. Hence $p^r(\omega_r - d\alpha_r) = 0$ in $\Omega_{\OO_L / \OO_K}$ so that $\omega = d(p^r \alpha_r)$. We now prove (2). The exact sequence above implies that $\OO_L^{d=0} \subset \OO_L^{d'=0}$. Conversely, if $x \in \OO_L^{d'=0}$, then $dx \in \ker g = \im f$, so that $p^r \cdot d x = 0$. Hence $p^r \cdot \OO_L^{d'=0} \subset \OO_L^{d=0}$.
\end{proof}

\begin{coro}
\label{reduczp}
In order to prove theorem B, we can replace $K$ by any finite subextension $K'$ of $K$. In particular, we can assume that $K_\infty/K$ is a totally ramified $\Zp$-extension. 
\end{coro}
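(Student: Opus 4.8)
The plan is to check that \emph{both} lattices occurring in theorem B are replaced by commensurable lattices when $K$ is enlarged to a finite subextension $K'$ with $K \subseteq K' \subseteq K_\infty$; since commensurability is an equivalence relation, theorem B for $K'$ then implies theorem B for $K$. First I would note that $K_\infty/K$ is deeply ramified (as recalled after coro~\ref{deepramdiff}), so that prop~\ref{subunif} applies with $L = K_\infty$ and this $K'$. For the second lattice there is nothing more to do: by prop~\ref{subunif}(2), the modules $\OO_{K_\infty}^{d=0}$ and $\OO_{K_\infty}^{d'=0}$ are commensurable, where $d'$ is the differential relative to $\OO_{K'}$.

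The work is therefore in the first lattice, and the fact to isolate is the following: if $F_1, F_2 \subseteq K_\infty$ are finite over $K$ and each $K_\infty/F_i$ is a totally ramified $\Zp$-extension with $n$-th layer $F_{i,n}$, then $\sum_{n \geq 0} p^n \OO_{F_{1,n}}$ and $\sum_{n \geq 0} p^n \OO_{F_{2,n}}$ are commensurable. To prove this I would put $L = F_1 F_2$: since $\Gal(K_\infty/L) = \Gal(K_\infty/F_1) \cap \Gal(K_\infty/F_2)$ has finite index in $\Gal(K_\infty/F_1) \cong \Zp$, it is itself $\cong \Zp$, so $K_\infty/L$ is again a totally ramified $\Zp$-extension and $L$ occurs as a layer of each of the two towers. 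Hence the towers $(F_{1,n})_n$ and $(F_{2,n})_n$ coincide from some index on, up to a fixed shift, and a routine rescaling then reduces the commensurability to the observation that each $\OO_{F_{i,N}}$ is contained in $p^{-N}\sum_{n \geq N} p^n \OO_{F_{i,n}}$, because $p^N \OO_{F_{i,N}}$ is one of the terms of that tail.

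Combining the two points with transitivity of commensurability gives that theorem B for $K'$ implies theorem B for $K$; taking $K' = K_0$ yields the last sentence of the statement, since $K_\infty/K_0$ is a totally ramified $\Zp$-extension by construction. I do not expect a serious obstacle here: the only thing to keep track of is that ``$K_n$'' always denotes the layers of whichever $\Zp$-extension is currently in play, and all the substance of the reduction is already carried by prop~\ref{subunif}.
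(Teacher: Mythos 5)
Your proof is correct and rests on the same key ingredient the paper intends, namely prop~\ref{subunif}(2); the paper states the corollary without further proof immediately after that proposition. Your additional check that the lattice $\sum_{n \geq 0} p^n \OO_{K_n}$ is independent, up to commensurability, of the choice of totally ramified $\Zp$-subtower is a point the paper leaves implicit (it tacitly keeps the tower over $K_0$ fixed when passing to $K'=K_0$), and your compositum argument settles that point correctly for an arbitrary finite subextension $K'$.
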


\section{Ramification in $\Zp$-extensions}
\label{zpsec}

Let $K_\infty/K$ be a totally ramified $\Zp$-extension. We recall some of the results of \S 3.1 of \cite{T67} concerning the ramification of $K_\infty/K$ and the action of $\Gal(K_\infty/K)$ on $K_\infty$. Let $K_n$ be the $n$-th layer of $K_\infty/K$, so that $[K_n:K]=p^n$.

\begin{prop}
\label{tatediff}
There are constants $a,b$ such that $|{\vp(\frD_{K_n/K}) - n - b}| \leq p^{-n} a$ for $n \geq 0$. 
\end{prop}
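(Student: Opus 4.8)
The plan is to reduce the statement to the classical computation of the higher ramification filtration for a totally ramified $\Zp$-extension, following \S 3.1 of \cite{T67}. First I would fix a uniformizer $\pi_n$ of $K_n$ and recall that since $K_\infty/K$ is totally ramified and $\Gal(K_n/K) \cong \ZZ/p^n$, the different satisfies the conductor-discriminant type formula $\vp(\frD_{K_n/K}) = \sum_{\sigma \neq 1} \vp(\sigma(\pi_n) - \pi_n) \cdot \tfrac{1}{e_n}$, normalized so that $\vp$ takes the value $1/e_n = 1/p^n$ on a uniformizer of $K_n$. Equivalently, writing $i_{K_n/K}(\sigma) = \vp(\sigma(\pi_n)-\pi_n)$ in the normalization where $\vp$ is additive on $K_n$ with $\vp(\pi_n) = p^{-n}$, one has $\vp(\frD_{K_n/K}) = \sum_{\sigma \neq 1} i_{K_n/K}(\sigma)$, and this is insensitive to the choice of $\pi_n$.

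Next I would invoke Tate's analysis of the ramification breaks. Tate shows (loc. cit.) that the ramification filtration of $\Gal(K_\infty/K)$ in the upper numbering has a single infinite sequence of breaks that eventually grows like an arithmetic progression: there is an integer $c$ and jumps occurring at essentially regularly spaced places, so that in lower numbering the $j$-th ramification subgroup $G_j = \Gal(K_\infty/K)_j$ equals $\Gal(K_\infty/K_m)$ for $j$ in an interval whose endpoints I can compute explicitly in terms of $p$ and the first break. Plugging this description of the $G_j$ into $\vp(\frD_{K_n/K}) = \tfrac{1}{p^n}\sum_{j \geq 0}(\#(G_j \cap \Gal(K_n/K)) - 1)$ turns the sum into a geometric-type series in $p$, whose value is $n$ plus a constant plus an error term that is $\bigO(p^{-n})$; the geometric tail is exactly what produces the $p^{-n}a$ bound. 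The constant $b$ absorbs the contribution of the finitely many "irregular" low-level breaks together with the limiting value of the geometric correction.

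I would present the computation by first treating the case where $K_\infty/K$ is, in Tate's terminology, "regular" from level $0$ on — i.e. the ramification breaks are exactly at an arithmetic progression — where the sum telescopes cleanly and one gets $\vp(\frD_{K_n/K}) = n + b_0 + c_0 p^{-n}$ on the nose for explicit $b_0, c_0$; then I would observe that any totally ramified $\Zp$-extension becomes regular after a finite base change $K \rightsquigarrow K_{n_0}$, and that replacing $K$ by $K_{n_0}$ changes $\vp(\frD_{K_n/K})$ by an additive constant (via the tower formula $\frD_{K_n/K} = \frD_{K_n/K_{n_0}} \cdot \frD_{K_{n_0}/K}$) and reindexes $n$ by $n - n_0$, so the asymptotic shape $n + b + \bigO(p^{-n})$ is preserved. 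The main obstacle is bookkeeping: getting the normalization of $\vp$ consistent across the layers $K_n$ (valuations on $K_n$ versus on $K_\infty$), and carefully tracking how the finitely many low ramification breaks feed into the constant $b$ versus the error term; the analytic content — summing a geometric series — is entirely routine once the ramification data from \cite{T67} is in hand.
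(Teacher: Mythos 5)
Your proposal is correct and is essentially the paper's proof: the paper simply cites \S 3.1 of Tate's article, and what you write out is precisely Tate's argument there (eventually-arithmetic upper-numbering ramification breaks via local class field theory, the different computed as a geometric-type sum over the ramification filtration, and the tower formula $\frD_{K_n/K} = \frD_{K_n/K_{n_0}}\cdot\frD_{K_{n_0}/K}$ to absorb the irregular low layers into $b$). One bookkeeping slip: your first displayed formula carries an extra factor $1/e_n$ that double-counts the normalization of $\vp$ — the \emph{Equivalently} reformulation that follows it is the correct one.
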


\begin{proof}
See \S 3.1 of \cite{T67}.
\end{proof}

The notation $\sum_{n \geq 0} p^n \OO_{K_n}$ denotes the set of elements of $K_\infty$ that are finite sums of elements of $p^n \OO_{K_n}$.

\begin{coro}
\label{trivinc}
There exists $n_0 \geq 0$ such that $\sum_{n \geq 0} p^{n+n_0} \OO_{K_n} \subset \OO_{K_\infty}^{d=0}$.
\end{coro}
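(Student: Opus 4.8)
The plan is to express the differential $d$ on $\OO_{K_\infty}$ explicitly using proposition \ref{fontrec}, and to combine this with Tate's estimate on the different (proposition \ref{tatediff}) to see that a high enough power of $p$ kills $dx$ for every $x \in \OO_{K_n}$. First I would recall that by corollary \ref{deepramdiff} we have $\Omega_{\OO_{K_\infty}/\OO_K} = K_\infty/\OO_{K_\infty}$, and that $\Omega_{\OO_{K_n}/\OO_K} = \OO_{K_n}/\frD_{K_n/K}$ injects into it compatibly (parts (1) and (2) of proposition \ref{fontrec}). So for $x \in \OO_{K_n}$, the condition $dx = 0$ in $\Omega_{\OO_{K_\infty}/\OO_K}$ is equivalent to $dx = 0$ in $\Omega_{\OO_{K_n}/\OO_K} = \OO_{K_n}/\frD_{K_n/K}$, i.e. to $dx \in \frD_{K_n/K}$ viewed inside $\OO_{K_n}$ under a chosen identification.

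Next I would make the differential concrete. Pick a uniformizer $\pi_n$ of $K_n$; then $\OO_{K_n} = \OO_K[\pi_n]$ (since $K_n/K$ is totally ramified), the module $\Omega_{\OO_{K_n}/\OO_K}$ is generated by $d\pi_n$, and the different $\frD_{K_n/K}$ is generated by $P_n'(\pi_n)$ where $P_n$ is the minimal polynomial of $\pi_n$ over $K$. Under the identification $\Omega_{\OO_{K_n}/\OO_K} \cong \OO_{K_n}/(P_n'(\pi_n))$ sending $d\pi_n \mapsto 1$, the differential becomes $x = Q(\pi_n) \mapsto Q'(\pi_n) \bmod P_n'(\pi_n)$. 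Hence for any $x \in \OO_{K_n}$, the element $dx \in \Omega_{\OO_{K_n}/\OO_K}$ is represented by an element of $\OO_{K_n}$, so it is annihilated by $P_n'(\pi_n)$, and therefore annihilated by $p^{\vp(\frD_{K_n/K})}$ (rounding the valuation up to an integer). Then $p^{\lceil \vp(\frD_{K_n/K}) \rceil} \OO_{K_n} \subset \OO_{K_\infty}^{d=0}$.

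Now I bring in proposition \ref{tatediff}: there are constants $a, b$ with $\vp(\frD_{K_n/K}) \leq n + b + p^{-n} a$ for all $n \geq 0$. Choosing $n_0$ to be any integer with $n_0 \geq b + a + 1$ (so that $n_0 \geq \lceil n + b + p^{-n} a \rceil - n$ for every $n \geq 0$), we get $p^{n + n_0}\OO_{K_n} \subset p^{\lceil \vp(\frD_{K_n/K})\rceil}\OO_{K_n} \subset \OO_{K_\infty}^{d=0}$ for all $n$. Since $\OO_{K_\infty}^{d=0}$ is an $\OO_K$-submodule of $\OO_{K_\infty}$, it contains all finite sums of such elements, i.e. $\sum_{n \geq 0} p^{n+n_0}\OO_{K_n} \subset \OO_{K_\infty}^{d=0}$, which is the claim.

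I do not expect a serious obstacle here: the only point requiring a little care is the bookkeeping with $\OO_{K_n} = \OO_K[\pi_n]$ and the identification of $\frD_{K_n/K}$ with the ideal generated by $P_n'(\pi_n)$ — standard facts about monogenic extensions — together with rounding the (generally non-integral) valuation $\vp(\frD_{K_n/K})$ up to an integer when passing from "$dx$ is killed by $\frD_{K_n/K}$" to "$dx$ is killed by a power of $p$". The harder direction — that $\sum_{n\geq 0} p^n\OO_{K_n}$ is actually commensurable with $\OO_{K_\infty}^{d=0}$, not merely contained in a shifted version of it — is the content of theorem B and is presumably handled later using the lower bound half of proposition \ref{tatediff}.
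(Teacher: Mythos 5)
Your argument is correct and is essentially the one the paper intends: the corollary is stated there without proof, as an immediate consequence of Proposition~\ref{tatediff} combined with the identification $\Omega_{\OO_{K_n}/\OO_K}=\OO_{K_n}/\frD_{K_n/K}$ of Proposition~\ref{fontrec}, which is exactly how you argue. Your explicit computation with $\OO_{K_n}=\OO_K[\pi_n]$ and $P_n'(\pi_n)$ merely re-derives that identification and is harmless.
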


\begin{prop}
\label{fonemb}
There exists $c(K_\infty/K) > 0$ such that for all $n,k \geq 0$ and $x \in \OO_{K_{n+k}}$, we have $\vp(\Nm_{K_{n+k}/K_n}(x)/x^{[K_{n+k}:K_n]}-1) \geq c(K_\infty/K)$.
\end{prop}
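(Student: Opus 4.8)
The plan is to reduce to the case $k=1$ by a telescoping computation with norms, and then to read the $k=1$ estimate off Tate's control of the different (prop \ref{tatediff}) together with elementary local ramification theory. The point of the whole argument is not any single step but the uniformity in $n$.

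For the reduction, fix $x \in \OO_{K_{n+k}}\setminus\{0\}$, put $y_k = x$ and $y_{l} = \Nm_{K_{n+l+1}/K_{n+l}}(y_{l+1})$ for $l = k-1,\dots,0$, so that $y_0 = \Nm_{K_{n+k}/K_n}(x)$ and $y_l \in \OO_{K_{n+l}}$. Since each $K_{m+1}/K_m$ is totally ramified we have $\vp(\Nm_{K_{m+1}/K_m}(\,\cdot\,)) = p\,\vp(\,\cdot\,)$, so each $1+\eps_j := y_{j-1}/y_j^p = \Nm_{K_{n+j}/K_{n+j-1}}(y_j)/y_j^p$ is a unit; substituting repeatedly gives $\Nm_{K_{n+k}/K_n}(x)/x^{p^k} = \prod_{j=1}^{k}(1+\eps_j)^{p^{j-1}}$. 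If we know $\vp(\eps_j) \geq c$ for some $c = c(K_\infty/K) > 0$ — which is exactly the case $k=1$ applied to the successive extensions $K_{n+j}/K_{n+j-1}$, and where uniformity in the layer is what is needed — then $\vp((1+\eps_j)^{p^{j-1}}-1) \geq \vp(\eps_j) \geq c$ (because $\vp(\eps_j)>0$), and a finite product of factors that are $\equiv 1$ modulo elements of valuation $\geq c$ is again $\equiv 1$ modulo such. So it suffices to treat $k=1$ with a constant uniform in $n$.

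For $k=1$, write $L = K_{n+1}$, $L' = K_n$; fix a uniformizer $\pi$ of $L$ and a generator $\sigma$ of the cyclic group $\Gal(L/L')$ of prime order $p$, and set $i_{n+1} = v_L(\sigma(\pi)-\pi)$ (normalized valuation of $L$). Classical ramification theory gives: $v_L(\tau(y)-y) \geq i_{n+1}$ for all $y \in \OO_L$ and all $\tau \in \Gal(L/L')$; $v_L(\frD_{L/L'}) = (p-1)\,i_{n+1}$; and $i_{n+1} \geq 2$ because $L/L'$ is wildly ramified. Since $1+\MM_L^{i_{n+1}-1}$ is a multiplicative group and any $x \in L^\times$ is $\pi^m u$ with $m \in \ZZ$, $u \in \OO_L^\times$, one gets $\tau(x)/x = (\tau(\pi)/\pi)^m\cdot(\tau(u)/u) \in 1+\MM_L^{i_{n+1}-1}$ for every $\tau$; hence $\Nm_{L/L'}(x)/x^p = \prod_{i=0}^{p-1}\sigma^i(x)/x \in 1+\MM_L^{i_{n+1}-1}$, i.e. $\vp(\Nm_{L/L'}(x)/x^p - 1) \geq (i_{n+1}-1)/e_L$ with $e_L = e(K/\Qp)\,p^{n+1}$. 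Now $(i_{n+1}-1)/e_L = \vp(\frD_{K_{n+1}/K_n})/(p-1) - 1/e_L$, and by prop \ref{tatediff} together with the tower formula for differents, $\vp(\frD_{K_{n+1}/K_n}) = \vp(\frD_{K_{n+1}/K}) - \vp(\frD_{K_n/K}) = 1 + \bigO(p^{-n})$; thus $(i_{n+1}-1)/e_L \to 1/(p-1)$ as $n \to \infty$ while staying $>0$ for each $n$ (since $i_{n+1}\geq 2$). Its infimum over $n \geq 0$ is therefore a positive constant, which we take as $c(K_\infty/K)$.

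\textbf{The main difficulty} is precisely this uniformity: prop \ref{tatediff} only pins down $\vp(\frD_{K_{n+1}/K_n})$ asymptotically, so for the finitely many small $n$ one must fall back on the bare fact that $K_{n+1}/K_n$ is wildly ramified to keep $(i_{n+1}-1)/e_L$ positive. A secondary point is that the estimate is required for all $x \in \OO_{K_{n+k}}$, not merely units, which is what forces the factorization $x = \pi^m u$ and the use of the group structure of $1+\MM_L^{i_{n+1}-1}$ (so that the potentially large exponent $m$ causes no loss).
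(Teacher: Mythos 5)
Your proof is correct, and it takes a genuinely different route from the paper. The paper disposes of this proposition by citation: it invokes Wintenberger's theory of strictly APF extensions (the extension $K_\infty/K$ is strictly APF by 1.2.2 of \cite{W83}, and the norm estimate is then extracted from 1.2.1, 4.2.2 and 1.2.3 of that paper), which is exactly the general framework designed to produce such uniform norm-compatibility bounds. You instead give a self-contained argument: the telescoping reduction to successive layers is clean (and the use of the multiplicative group $1+\MM_L^{i_{n+1}-1}$ to absorb the exponent $m$ in $x=\pi^m u$ is the right way to handle non-units), and the uniformity in $n$ is obtained by converting $i_{n+1}$ into $\vp(\frD_{K_{n+1}/K_n})$ via $v_L(\frD_{L/L'})=(p-1)i_{n+1}$ for a cyclic degree-$p$ extension and then feeding in prop \ref{tatediff} through the tower formula for differents. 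This is legitimate because prop \ref{tatediff} is logically prior (both are quoted from Tate, \S 3.1 of \cite{T67}), so no circularity arises; your treatment of the finitely many small $n$ via wild ramification ($i_{n+1}\geq 2$) correctly plugs the gap left by the merely asymptotic nature of Tate's estimate. What your approach buys is an explicit and near-optimal constant, essentially $1/(p-1)$ up to $\bigO(p^{-n})$ corrections and finitely many exceptional layers, which would even make the paper's later quantity $m_c$ effectively computable; what the paper's approach buys is brevity and immediate applicability to the general dimension-$1$ $p$-adic Lie setting before the reduction to a $\Zp$-extension.
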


\begin{proof}
The result follows from the fact (see 1.2.2 of \cite{W83}) that the extension $K_\infty/K$ is strictly APF. One can then apply 1.2.1, 4.2.2 and 1.2.3 of \cite{W83}.
\end{proof}

If $n \geq 0$ and $x \in K_\infty$, then $R_n(x) = p^{-k} \cdot \Tr_{K_{n+k}/K_n} (x)$ is independent of $k \gg 0$ such that $x \in K_{n+k}$, and is the normalized trace of $x$.

\begin{prop}
\label{rnbdd}
There exists $c_2 \in \ZZ_{\geq 0}$ such that $\vp(R_n(x)) \geq \vp(x)-c_2$ for all $n \geq 0$ and $x \in K_\infty$.
\end{prop}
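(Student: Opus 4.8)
The plan is to reduce the uniform bound on $R_n$ to the uniform bound on the level-$1$ normalized trace and then iterate, exactly in the spirit of Tate's original argument in \S 3.1 of \cite{T67}. First I would observe that it suffices to treat $R_0$: indeed, each layer $K_\infty/K_n$ is itself a totally ramified $\Zp$-extension, obtained from $K_\infty/K$ by the base change of corollary \ref{reduczp}, and the constant produced below will depend only on $c(K_\infty/K)$ from proposition \ref{fonemb}, which is insensitive to replacing $K$ by a finite subextension. So I fix $n=0$ and must bound $\vp(R_0(x))$ below by $\vp(x)-c_2$ for a constant $c_2$ independent of $x \in K_\infty$.

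Next I would set up the telescoping. Write $T_m : K_{m+1} \to K_m$ for the map $p^{-1}\Tr_{K_{m+1}/K_m}$, so that on $K_k$ one has $R_0 = T_0 \circ T_1 \circ \cdots \circ T_{k-1}$. The core estimate is that there is a single constant $c$ with $\vp(T_m(x)) \geq \vp(x) - c$ for all $m \geq 0$ and all $x \in K_{m+1}$; granting this, we get $\vp(R_0(x)) \geq \vp(x) - kc$, which is \emph{not} yet uniform, so the telescoping must be done more carefully: one splits $T_m = \Id - (\Id - T_m)$ and shows that $\Id - T_m$ \emph{improves} valuations by a fixed positive amount on the kernel of $T_m$, i.e. $\vp((\Id-T_m)(x)) \geq \vp(x) + \delta$ for $x \in \ker T_m$, with $\delta>0$ independent of $m$. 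Decomposing $K_{m+1} = K_m \oplus \ker T_m$ via $T_m$ and iterating, the contributions from successive kernels are geometrically damped, and the partial normalized traces converge with a uniform loss $c_2$ depending only on $c$ and $\delta$. This is precisely Tate's mechanism, and the constants $c,\delta$ come out of proposition \ref{tatediff}: the different $\frD_{K_{m+1}/K_m}$ has valuation $\approx 1$ uniformly in $m$ (by proposition \ref{tatediff} applied to $K_\infty/K_m$, whose constants are again controlled uniformly), which controls how far $T_m$ is from being valuation-preserving and how much $\Id - T_m$ gains.

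The main obstacle I anticipate is making the uniformity genuinely uniform in $n$ and $m$, rather than just obtaining, for each fixed tower, \emph{some} constant. Two inputs must be checked to be uniform: (i) the constants $a,b$ in proposition \ref{tatediff} for the tower $K_\infty/K_m$ do not degrade as $m$ grows --- here one uses that the upper-numbering ramification breaks of $\Gal(K_\infty/K)$ go to infinity linearly, so the "defect" $\vp(\frD_{K_{m+1}/K_m})-1$ is $\bigO(p^{-m})$ with an absolute implied constant; and (ii) the gain $\delta$ of $\Id - T_m$ on $\ker T_m$ can be taken independent of $m$ --- this is where proposition \ref{fonemb} enters, since the near-multiplicativity of norms (equivalently, the APF property) is exactly what forces the action of $\Gal(K_{m+1}/K_m)$ on $\OO_{K_{m+1}}$ to be close to trivial modulo a fixed power of $p$, yielding the uniform smoothing estimate. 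Once both uniformities are in hand, assembling the geometric series is routine and produces an explicit $c_2 \in \ZZ_{\geq 0}$.
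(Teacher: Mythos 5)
The paper's own ``proof'' is just a citation to \S 3.1 of \cite{T67}, so the question is whether your reconstruction of Tate's argument is sound. It is not, because of one step that is literally false: you claim that $\Id - T_m$ improves valuations by a fixed $\delta>0$ on $\ker T_m$. Since $[K_{m+1}:K_m]=p$, the map $T_m = p^{-1}\Tr_{K_{m+1}/K_m}$ is the identity on $K_m$, hence (viewed as an endomorphism of $K_{m+1}$) it is the projector onto $K_m$ along $\ker T_m$, and $\Id - T_m$ restricted to $\ker T_m$ is the identity. So $\vp((\Id-T_m)(x))=\vp(x)$ there, and no gain $\delta>0$ is possible; the ``geometric damping'' cannot come from that mechanism. (The estimate you seem to be reaching for --- that $1-g_n$ has a bounded inverse on $K_n^\perp$ --- is lemma \ref{gaminv}, a separate and genuinely harder statement, and it is not what is needed for prop \ref{rnbdd}.) Prop \ref{fonemb} also plays no role here; it is an input for prop \ref{rhoval}, not for the trace bounds.

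The correct route is simpler and direct, with no telescoping through intermediate layers. The standard relation between trace and different gives, for $x \in K_{n+k}$, $\vp(\Tr_{K_{n+k}/K_n}(x)) \geq \vp(x) + \vp(\frD_{K_{n+k}/K_n}) - 1$, and by multiplicativity of the different in towers together with prop \ref{tatediff}, $\vp(\frD_{K_{n+k}/K_n}) = \vp(\frD_{K_{n+k}/K}) - \vp(\frD_{K_n/K}) \geq k - 2a$. Hence $\vp(R_n(x)) = \vp(\Tr_{K_{n+k}/K_n}(x)) - k \geq \vp(x) - 2a - 1$, uniformly in $n$ and $k$, and one may take $c_2 = \lceil 2a+1 \rceil$. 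Your observation that the defect $\vp(\frD_{K_{m+1}/K_m})-1$ is $\bigO(p^{-m})$ is correct and would also repair the layer-by-layer telescoping (each $T_m$ then loses only $\bigO(p^{-m})$, and the losses are summable), but as written your plan attributes the uniformity to an estimate that fails, so the argument as proposed does not go through.
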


\begin{proof}
See \S 3.1 of \cite{T67} (including the remark at the bottom of page 172).
\end{proof}

In particular, $R_n(\OO_{K_\infty}) \subset p^{-c_2} \OO_{K_n}$ for all $n \geq 0$. Let $K_0^\perp = K_0$ and for $n \geq 1$, let $K_n^\perp$ be the kernel of $R_{n-1} : K_n \to K_{n-1}$, let $R_n^\perp = R_n - R_{n-1}$, and $R_0^\perp = R_0$. If $x \in K_\infty$ and $i \geq 0$, then $R_n^\perp(x) = 0$ for $n \gg 0$, and $x = (\sum_{n \geq i+1} R_n^\perp(x)) + R_i(x)$. Prop \ref{rnbdd} implies that $R_n^\perp(\OO_{K_\infty}) \subset p^{-c_2} \OO_{K_n}$ for all $n \geq 0$. Let $\OO_{K_n}^\perp = \OO_{K_n} \cap K_n^\perp$.

\begin{coro}
\label{decper}
For all $i \geq 0$, we have $\OO_{K_\infty} \subset (\oplus_{m \geq i+1} p^{-c_2} \OO_{K_m}^\perp) \oplus p^{-c_2}\OO_{K_i}$.
\end{coro}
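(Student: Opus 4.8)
The plan is to run the element-wise decomposition recorded just before the statement: for any $x \in K_\infty$ one has $x = \bigl(\sum_{n \geq i+1} R_n^\perp(x)\bigr) + R_i(x)$, the sum having only finitely many nonzero terms. So if $x \in \OO_{K_\infty}$ it suffices to check that each summand lies in the asserted lattice, and that the displayed sum is direct.

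First I would treat the last term $R_i(x)$. It lies in $K_i$ by definition of the normalized trace, and prop~\ref{rnbdd} gives $\vp(R_i(x)) \geq \vp(x) - c_2 \geq -c_2$ since $\vp(x)\geq 0$, hence $R_i(x) \in p^{-c_2}\OO_{K_i}$.

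Next, for each $m \geq i+1$ the term $R_m^\perp(x) = R_m(x) - R_{m-1}(x)$ lies in $K_m$; applying $R_{m-1}$ and using the compatibility $R_{m-1}\circ R_m = R_{m-1}$ together with the idempotence of $R_{m-1}$ yields $R_{m-1}(R_m^\perp(x)) = R_{m-1}(x) - R_{m-1}(x) = 0$, so $R_m^\perp(x) \in K_m^\perp$. Combined with the inclusion $R_m^\perp(\OO_{K_\infty}) \subset p^{-c_2}\OO_{K_m}$ noted after prop~\ref{rnbdd}, this places $R_m^\perp(x)$ in $p^{-c_2}\OO_{K_m} \cap K_m^\perp = p^{-c_2}\OO_{K_m}^\perp$. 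Since $R_m^\perp(x) = 0$ for $m \gg 0$, we conclude $x \in \bigl(\oplus_{m \geq i+1} p^{-c_2}\OO_{K_m}^\perp\bigr) \oplus p^{-c_2}\OO_{K_i}$; the directness of the sum is the standard consequence of the orthogonality relations $R_m^\perp \circ R_{m'}^\perp = 0$ for $m \neq m'$ and $R_m^\perp \circ R_i = 0$ for $m > i$, which follow formally from $R_m \circ R_{m'} = R_{\min(m,m')}$.

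I do not expect a genuine obstacle here: every ingredient has already been assembled in the paragraph preceding the statement. The only points deserving a moment's care are the identity $R_{m-1}\circ R_m = R_{m-1}$ (which is what upgrades ``$R_m^\perp(x) \in K_m$'' to ``$R_m^\perp(x) \in K_m^\perp$''), and the bookkeeping showing that the sum in the conclusion is genuinely direct — both of which are immediate from the compatibilities of the normalized traces $R_n$ established in \cite{T67}.
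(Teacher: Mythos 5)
Your proof is correct and follows the same route as the paper: decompose $x=\sum_{m\geq i+1}R_m^\perp(x)+R_i(x)$ and invoke prop~\ref{rnbdd} (via the inclusion $R_n^\perp(\OO_{K_\infty})\subset p^{-c_2}\OO_{K_n}$) for each term. The extra details you supply — the verification that $R_m^\perp(x)\in K_m^\perp$ via $R_{m-1}\circ R_m=R_{m-1}$ and the directness of the sum — are exactly the points the paper leaves implicit, and they check out.
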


\begin{proof}
If $x \in \OO_{K_\infty}$, write $x=\sum_{m \geq i+1} R_m^\perp(x) +R_i(x)$.
\end{proof}

For $n \geq 0$, let $g_n$ denote a topological generator of $\Gal(K_\infty/K_n)$.

\begin{lemm}
\label{gaminv}
There exists a constant $c_3$ such that for all $n \geq 1$ and $x \in K_n^{\perp}$, we have $\vp(x) \geq \vp((1-g_n)(x))-c_3$.
\end{lemm}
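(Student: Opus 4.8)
The plan is to bound $\vp(x)$ from below in terms of $\vp((1-g_n)(x))$ for $x \in K_n^\perp$, uniformly in $n$. The key is to relate $(1-g_n)$ to the normalized trace $R_{n-1}$: on $K_n$, the operator $R_{n-1}$ is the projector onto $K_{n-1}$ with kernel $K_n^\perp$, and $1-g_n$ kills $K_{n-1}$, so both operators are ``supported'' on the same complement. More precisely, I would first observe that for $x \in K_n$ we have $\Tr_{K_n/K_{n-1}}(x) = \sum_{i=0}^{p-1} g_{n-1}^i(x)$, and that $R_{n-1}(x) = p^{-1}\Tr_{K_n/K_{n-1}}(x)$ when $x \in K_n$. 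Writing $x = y - R_{n-1}(x)$ with $y \in K_n$ chosen suitably, or rather working directly with $x \in K_n^\perp$ so that $R_{n-1}(x)=0$, the idea is to invert $1-g_n$ on $K_n^\perp$.

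The main tool is the elementary identity: if $\sigma$ is an automorphism of order dividing $p$ on a space (here $g_{n-1}$ acting on $K_n$, with $g_{n-1} = g_n$ modulo $\Gal(K_\infty/K_{n-1})$ — more carefully, $g_n$ is a generator of $\Gal(K_\infty/K_n)$ and on $K_n$ it is $g_{n-1}$ restricted, which has order $p$), then on the kernel of $\sum_{i=0}^{p-1}\sigma^i$ the operator $1-\sigma$ is invertible, with inverse $-\frac{1}{p}\sum_{i=1}^{p-1} i\, \sigma^i$ (from $(1-\sigma)(\sum_{i=1}^{p-1} i\sigma^i) = \sum_{i=1}^{p-1} i\sigma^i - \sum_{i=1}^{p-1} i \sigma^{i+1}$, which telescopes to $(\sum_{i=0}^{p-1}\sigma^i) - p\sigma^p = (\sum_{i=0}^{p-1}\sigma^i) - p$ on the relevant space, hence equals $-p$ on the kernel of the norm-like sum). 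Thus for $x \in K_n^\perp$ we can write $x = -\frac{1}{p}\sum_{i=1}^{p-1} i\, g_n^i((1-g_n)(x))$, and since $g_n$ preserves valuations and $\vp(1/p) = -1$, this gives $\vp(x) \geq \vp((1-g_n)(x)) - 1$, which would even give $c_3 = 1$.

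The step I expect to be slightly delicate, and where I would be careful, is justifying that $g_n$ acts on $K_n$ with $K_n^\perp$ lying in the kernel of $\sum_{i=0}^{p-1} g_{n-1}^i$: one must check that the restriction of $g_n$ to $K_n$ is a generator of $\Gal(K_n/K_{n-1})$ and that $R_{n-1}|_{K_n} = \frac{1}{p}\sum_{i=0}^{p-1}(g_{n-1}|_{K_n})^i$, so that $K_n^\perp = \ker(R_{n-1}|_{K_n})$ is exactly the kernel of that power sum. Given the normalization conventions fixed before the lemma (where $g_n$ generates $\Gal(K_\infty/K_n)$, so its action on $K_n$ factors through $\Gal(K_n/K_{n-1})$ and is induced by $g_{n-1}$), this is routine, and then the telescoping identity above does the rest with an explicit constant. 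If one prefers not to track the exact constant, one simply notes that $-\frac1p\sum i\,g_n^i$ has operator norm at most $p$ on $\OO_{K_\infty}$ for the $p$-adic metric, yielding the bound with $c_3$ independent of $n$.
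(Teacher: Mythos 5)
Your proof is correct in substance, but it takes a genuinely different and more elementary route than the paper, whose proof is a citation to Tate's \S 3.1, where the bound is extracted from the ramification estimates for the tower (and in a stronger form: $1-g_n$ admits a bounded inverse on all of $(1-R_n)\hat{K}_\infty$, not just on a single layer). Your key observation is that the statement actually used later --- in prop \ref{diffvec} the lemma is invoked as $\vp(x_{n+1}) \geq \vp((1-g_n)(x_{n+1})) - c_3$ for $x_{n+1} \in K_{n+1}^\perp$ --- only concerns the trace-zero part of one degree-$p$ layer, where everything is pure algebra: writing $\sigma$ for the generator of $\Gal(K_n/K_{n-1})$ induced by $g_{n-1}$, one has $K_n^\perp = \ker\bigl(\sum_{i=0}^{p-1}\sigma^i\bigr)$, the identity $(1-\sigma)\sum_{i=1}^{p-1} i\sigma^i = \bigl(\sum_{i=0}^{p-1}\sigma^i\bigr) - p$ shows that $-\tfrac1p\sum_{i=1}^{p-1} i\sigma^i$ inverts $1-\sigma$ on $K_n^\perp$, and since $\sigma$ is an isometry and $\vp(i)\geq 0$ this gives $\vp(x) \geq \vp((1-\sigma)(x)) - 1$, i.e.\ $c_3=1$, uniformly in $n$ and with no input from props \ref{tatediff} or \ref{rnbdd}. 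What the citation to Tate buys instead is the uniform bound on the full orthogonal complement of $K_n$ in the completion; that stronger statement genuinely needs the ramification estimates (your telescoping inverse on $K_{n+k}$ would cost $p^k$), but it is not what is needed here.

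One point to clean up before this can stand as a proof: as literally written, both the lemma and your displayed identity use $g_n$, which generates $\Gal(K_\infty/K_n)$ and hence fixes $K_n^\perp \subset K_n$ pointwise, so $(1-g_n)(x)=0$ and the formula $x = -\tfrac1p\sum_{i=1}^{p-1} i\, g_n^i((1-g_n)(x))$ is vacuous. The operator consistent with the application in prop \ref{diffvec} is $1-g_{n-1}$, acting on $K_n^\perp$ through the order-$p$ generator $\sigma$ of $\Gal(K_n/K_{n-1})$ that $g_{n-1}$ induces. You flag this, but your parenthetical explanation conflates $g_n$ and $g_{n-1}$; state the convention cleanly once, run the computation with $\sigma$, and the argument is complete.
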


\begin{proof}
See \S 3.1 of \cite{T67} (including the remark at the bottom of page 172).
\end{proof}

\section{The lattice $\OO_{K_\infty}^{d=0}$}
\label{lattsec}

We now prove theorem B. Thanks to coro \ref{reduczp}, we assume that $K_\infty/K$ is a totally ramified $\Zp$-extension. Let $\{\rho_n\}_{n \geq 0}$ be a norm compatible sequence of uniformizers of the $K_n$. Let $m_c \geq 0$ be the smallest integer such that $p^{m_c} \cdot c(K_\infty/K) \geq 1/(p-1)$.

\begin{prop}
\label{rhoval}
We have $\vp(\rho_{n+1}^{pk} - \rho_n^k) \geq  \vp(k)-m_c$.
\end{prop}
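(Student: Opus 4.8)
The plan is to compare $\rho_{n+1}^{pk}$ and $\rho_n^k$ through the norm map $\Nm_{K_{n+1}/K_n}$, using Proposition \ref{fonemb} together with the norm-compatibility of the sequence $\{\rho_m\}$. Since $[K_{n+1}:K_n]=p$ and the sequence is norm compatible, we have $\Nm_{K_{n+1}/K_n}(\rho_{n+1}) = \rho_n$ (up to the choice of sequence; this is exactly what "norm compatible" means here), hence $\Nm_{K_{n+1}/K_n}(\rho_{n+1}^k) = \rho_n^k$. On the other hand, applying Proposition \ref{fonemb} with the pair $(n+1,0)$ replaced appropriately — i.e. to the element $\rho_{n+1}^k \in \OO_{K_{n+1}}$ and the extension $K_{n+1}/K_n$ of degree $p$ — gives
\[
\vp\!\left( \frac{\Nm_{K_{n+1}/K_n}(\rho_{n+1}^k)}{(\rho_{n+1}^k)^{p}} - 1 \right) \geq c(K_\infty/K),
\]
that is, $\vp(\rho_n^k/\rho_{n+1}^{pk} - 1) \geq c(K_\infty/K)$, equivalently $\vp(\rho_n^k - \rho_{n+1}^{pk}) \geq \vp(\rho_{n+1}^{pk}) + c(K_\infty/K)$.

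First I would record the valuation $\vp(\rho_{n+1}^{pk})$. Since $\rho_{n+1}$ is a uniformizer of $K_{n+1}$ and $K_\infty/K$ is totally ramified, $\vp(\rho_{n+1}) = 1/e(K_{n+1}/K) = 1/(e(K/\Qp)\,p^{n+1})$, which tends to $0$ as $n$ grows; so the naive bound coming directly from Proposition \ref{fonemb} degrades with $n$. To get the clean uniform bound $\vp(k) - m_c$ I would instead iterate: amplify the exponent by a power of $p$. Concretely, replacing $k$ by $p^j k$ and using the previous display repeatedly (or applying Proposition \ref{fonemb} to the pair $K_{n+1+j}/K_{n+j}$ after climbing the tower), one gets control of the form $\vp(\rho_{n+1}^{p^{j+1}k} - \rho_n^{p^j k}) \geq \vp(\rho_{n+1}^{p^{j+1}k}) + c(K_\infty/K)$, and raising to the $p$-th power multiplies valuations while the "$-1$" estimate for $\Nm(x)/x^p$ improves by a factor of $p$ each time a $p$-th power is taken (this is the standard fact that if $\vp(u-1)\geq t$ then $\vp(u^p - 1) \geq \min(t + \vp(p), pt) \geq t+1$ for $t \geq 1/(p-1)$, and more importantly $\geq pt$ when one is below that threshold). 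The definition of $m_c$ as the least integer with $p^{m_c} c(K_\infty/K) \geq 1/(p-1)$ is precisely calibrated so that after $m_c$ such doublings the estimate crosses the threshold where taking $p$-th powers adds at least $1$ to the valuation; tracking the $p$-adic valuation of $k$ through this process produces the term $\vp(k)$ and the loss $-m_c$.

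The main obstacle I expect is bookkeeping the interaction between the two competing effects: raising $\rho_{n+1}^{pk} - \rho_n^k$ to $p$-th powers (which is what connects consecutive layers and multiplies valuations by $p$) versus the slow, $n$-dependent decay of $\vp(\rho_{n+1})$. One has to arrange the induction so that the constant $c(K_\infty/K)$, which does \emph{not} depend on $n$, is leveraged before the $\rho$-valuation shrinks too much — this is why the argument factors through norms of $p$-th powers rather than through a direct estimate, and why the threshold $1/(p-1)$ (the radius of convergence issue for $x \mapsto (1+x)^{1/p}$, equivalently for the $p$-power map on $1+\MM$) enters. Once the threshold is crossed, a clean telescoping/induction on $\vp(k)$ closes the proof; the subtlety is entirely in choosing the right quantity to induct on so that the bound becomes independent of $n$.
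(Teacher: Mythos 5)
Your plan is correct and follows essentially the same route as the paper: the base estimate $\vp(\rho_{n+1}^p-\rho_n)\geq c(K_\infty/K)$ from Proposition \ref{fonemb} and norm compatibility, then iteration of the $p$-th power map using $\vp(x^p-y^p)\geq\min(v+1,pv)$, with $m_c$ calibrated exactly so that the valuation crosses the threshold $1/(p-1)$ after which each further power of $p$ adds $1$. The only detail left implicit is the routine final step of writing $k=p^{\vp(k)}u$ with $p\nmid u$ and using $\vp(x^u-y^u)\geq\vp(x-y)$ to discard the prime-to-$p$ part of $k$.
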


\begin{proof}
Note that if $x,y \in \Cp$ with $\vp(x-y) \geq v$, then $\vp(x^p-y^p) \geq \min (v+1,pv)$. Let $c=c(K_\infty/K)$ and $m=m_c$. We have $\vp(\rho_{n+1}^p - \rho_n) \geq c$ by prop \ref{fonemb}, so that $\vp(\rho_{n+1}^{p^{j+1}} - \rho_n^{p^j}) \geq p^j c$ for all $j$ such that $p^{j-1} c \leq 1/(p-1)$. 

In particular, $\vp(\rho_{n+1}^{p^{m+1}} - \rho_n^{p^m}) \geq p^m c \geq 1/(p-1)$, so that $\vp(\rho_{n+1}^{p^{m+j+1}} - \rho_n^{p^{m+j}}) \geq j+1/(p-1)$ if $j \geq 0$. This implies the result.
\end{proof}

\begin{theo}
\label{fouvar}
There exists $n_1 \in \ZZ_{\geq 0}$ such that $\OO_{K_\infty}^{d=0} \subset \sum_{m \geq n_1} p^{m-n_1} \OO_{K_m}$.
\end{theo}

\begin{proof}
Let $n_1=\lceil a-b+m_c+2 \rceil$. Take $x \in \OO_{K_n}^{d=0}$ and write $x = \sum_{i=0}^{p^n-1} x_i \rho_n^i$ with $x_i \in \OO_K$, so that $dx = \sum_{i=0}^{p^n-1} i x_i \rho_n^{i-1} \cdot d \rho_n$. If $dx=0$, then $\sum_{i=0}^{p^n-1} i x_i \rho_n^{i-1} \in \frD_{K_n/K}$ so that by prop \ref{tatediff} (and since $\vp(\rho_n^{p^n}) \leq 1$), for all $i$ we have \[ \vp(x_i) \geq n-a+b-\vp(i)-1. \] 
For $k \geq 1$, let \[ y_k = \sum_{p \nmid j} x_{p^{k-1} j} \rho_{n-(k-1)}^j + \sum_{\ell} x_{p^k \ell} (\rho_{n-(k-1)}^{p \ell} - \rho_{n-k}^\ell). \]
Note that $y_k \in \OO_{K_{n-k+1}}$. Let us bound $\vp(y_k)$. We have $\vp (x_{p^{k-1} j} \rho_{n-(k-1)}^j) \geq n-a+b-k$. We also have $\vp(x_{p^k \ell}) \geq n-a+b-k-\vp(\ell)-1$, and $\vp(\rho_{n-(k-1)}^{p \ell} - \rho_{n-k}^\ell) \geq \vp(\ell)-m_c$ by prop \ref{rhoval}. Hence $\vp(y_k) \geq n-a+b-k-1-m_c$ and therefore $y_k \in p^{n-k+1-n_1} \OO_{K_{n-k+1}}$. Finally, we have $x=y_1+ \cdots + y_{n-n_1} + \sum_\ell x_{p^{n-n_1} \ell} \rho_{n_1}^\ell$, and $\sum_\ell x_{p^{n-n_1} \ell} \rho_{n_1}^\ell \in \OO_{K_{n_1}}$, which implies the result.
\end{proof}

\begin{rema}
\label{fouvarem}
Compare with lemma 4.3.2 of \cite{F05}.
\end{rema}

\begin{coro}
\label{perpdec}
We have $\OO_{K_\infty}^{d=0} \subset (\oplus_{m \geq n_1+1} p^{m-n_1-c_2} \OO_{K_m}^\perp) \oplus p^{-c_2}\OO_{K_{n_1}}$.
\end{coro}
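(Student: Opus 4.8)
The plan is to combine theorem \ref{fouvar} with the normalized-trace decomposition recalled in section \ref{zpsec}; in effect this corollary is the refinement of coro \ref{decper} obtained by feeding in theorem \ref{fouvar}. Fix $x \in \OO_{K_\infty}^{d=0}$. By theorem \ref{fouvar} we may write $x$ as a finite sum $x = \sum_{m \geq n_1} x_m$ with $x_m \in p^{m-n_1}\OO_{K_m}$, while at the same time the identity $x = \bigl(\sum_{m \geq n_1+1} R_m^\perp(x)\bigr) + R_{n_1}(x)$ (a finite sum, since $R_m^\perp(x) = 0$ for $m \gg 0$) expresses $x$ inside the internal direct sum $\bigl(\oplus_{m \geq n_1+1} K_m^\perp\bigr) \oplus K_{n_1}$, exactly as in the proof of coro \ref{decper}. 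So it is enough to bound the valuation of each $R_m^\perp(x)$ and of $R_{n_1}(x)$, and then to check that these bounds force membership in the lattices of the statement.

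The key point is that $R_m^\perp$ annihilates every layer strictly below level $m$: if $j \leq m-1$ then $x_j \in K_{m-1}$, so $R_m(x_j) = R_{m-1}(x_j) = x_j$ and hence $R_m^\perp(x_j) = 0$. Consequently $R_m^\perp(x) = \sum_{j \geq m} R_m^\perp(x_j)$, and prop \ref{rnbdd} (applied to both $R_m$ and $R_{m-1}$, hence to $R_m^\perp$) gives $\vp(R_m^\perp(x_j)) \geq \vp(x_j) - c_2 \geq (j-n_1) - c_2 \geq (m-n_1)-c_2$. Thus $\vp(R_m^\perp(x)) \geq (m-n_1)-c_2$ for all $m \geq n_1+1$, while the same proposition gives $\vp(R_{n_1}(x)) \geq \vp(x) - c_2 \geq -c_2$ since $x \in \OO_{K_\infty}$.

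Finally I would upgrade these valuation estimates to lattice membership. Since $R_{m-1} \colon K_m \to K_{m-1}$ is $K_{m-1}$-linear, $K_m^\perp = \ker R_{m-1}$ is a $K_{m-1}$-submodule of $K_m$, in particular a $\Zp$-module; as $R_m^\perp(x) \in K_m^\perp$ and $\vp(R_m^\perp(x)) \geq (m-n_1)-c_2$, the element $p^{-(m-n_1-c_2)} R_m^\perp(x)$ still lies in $K_m^\perp$ and now has non-negative valuation, hence lies in $\OO_{K_m}^\perp = \OO_{K_m} \cap K_m^\perp$; that is, $R_m^\perp(x) \in p^{m-n_1-c_2}\OO_{K_m}^\perp$. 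Likewise $R_{n_1}(x) \in K_{n_1}$ together with $\vp(R_{n_1}(x)) \geq -c_2$ gives $R_{n_1}(x) \in p^{-c_2}\OO_{K_{n_1}}$, and summing the finitely many nonzero terms yields the asserted inclusion. There is no serious obstacle here; the only point needing care is precisely this last step — keeping track that dividing a vector of $K_m^\perp$ by a power of $p$ keeps it inside $K_m^\perp$, so that a valuation bound genuinely translates into membership in $p^{m-n_1-c_2}\OO_{K_m}^\perp$ rather than merely in $p^{m-n_1-c_2}\OO_{K_m}$.
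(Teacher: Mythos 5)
Your proof is correct and follows essentially the same route as the paper: combine theorem \ref{fouvar} with prop \ref{rnbdd} and the decomposition $x = \sum_{m} R_m^\perp(x) + R_{n_1}(x)$, checking that each $R_m^\perp$ sends the piece $p^{j-n_1}\OO_{K_j}$ (for $j \geq m$) into $p^{m-n_1-c_2}\OO_{K_m}^\perp$ and kills the lower layers. The paper merely packages this by reducing to the inclusion $p^n\OO_{K_n} \subset (\oplus_{m \geq n_1+1} p^{m-c_2}\OO_{K_m}^\perp)\oplus p^{n_1-c_2}\OO_{K_{n_1}}$ for each $n \geq n_1$, which is the same computation.
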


\begin{proof}
By theorem \ref{fouvar}, it is enough to prove that $p^n \OO_{K_n} \subset (\oplus_{m \geq n_1+1} p^{m-c_2} \OO_{K_m}^\perp) \oplus p^{n_1-c_2}\OO_{K_{n_1}}$ for all $n \geq n_1$. If $x \in p^n \OO_{K_n}$, write $x = R_n^\perp(x) + R_{n-1}^\perp(x)  + \cdots + R_{n_1+1}^\perp(x) + R_{n_1}(x)$. We have $R_{n-k}^\perp(x) \in p^{n-c_2} \OO_{K_{n-k}}^\perp \subset p^{(n-k)-c_2} \OO_{K_{n-k}}^\perp$ and likewise $R_{n_1}(x) \in p^{n-c_2} \OO_{K_{n_1}} \subset  p^{n_1-c_2}\OO_{K_{n_1}}$.
\end{proof}

\begin{coro}
\label{nopdiv}
There are no nontrivial $p$-divisible elements in $d (\OO_{K_\infty})$.
\end{coro}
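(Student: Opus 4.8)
The plan is to deduce this directly from Corollary \ref{perpdec}, using the fact that the direct sum decomposition coming from the normalized traces is compatible with $p$-divisibility. Suppose for contradiction that $\omega \in d(\OO_{K_\infty})$ is a nonzero $p$-divisible element, so there is a sequence $\{\alpha_i\}_{i \geq 1}$ in $\OO_{K_\infty}$ with $d\alpha_1 = \omega \neq 0$ and $d\alpha_i = p \cdot d\alpha_{i+1}$ for all $i \geq 1$. After multiplying by a suitable power of $p$ we may assume each $\alpha_i \in \OO_{K_\infty}$; more to the point, the element $\beta_i := p^{i-1}\alpha_i$ satisfies $d\beta_i = \omega$ for all $i$, so $\alpha_i - p \alpha_{i+1} \in \OO_{K_\infty}^{d=0}$ and $p^{i-1}\alpha_i$ all lie in the fixed coset $\alpha_1 + \OO_{K_\infty}^{d=0}$.

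Next I would feed this into the decomposition of Corollary \ref{perpdec}. Fix a large $i$ and write $\alpha_i$ according to Corollary \ref{decper} as $\alpha_i = \sum_{m \geq n_1+1} R_m^\perp(\alpha_i) + R_{n_1}(\alpha_i)$ with $R_m^\perp(\alpha_i) \in p^{-c_2}\OO_{K_m}^\perp$ and $R_{n_1}(\alpha_i) \in p^{-c_2}\OO_{K_{n_1}}$. Since $d$ commutes with the Galois-equivariant projectors $R_m^\perp$ and $R_{n_1}$, the relation $d\alpha_i = p^{1-i}\omega$ forces each component $d(R_m^\perp(\alpha_i))$ and $d(R_{n_1}(\alpha_i))$ to be $p$-divisible inside $\Omega_{\OO_{K_\infty}/\OO_K}$ by larger and larger powers of $p$ as $i$ grows — concretely, comparing with the analogous decomposition of $\alpha_1$, we get $R_m^\perp(p^{i-1}\alpha_i) - R_m^\perp(\alpha_1) \in \OO_{K_m}^{d=0}$ and similarly for the $R_{n_1}$ part. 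The point is that by Corollary \ref{perpdec} applied to the $d=0$ part, $R_m^\perp(\OO_{K_\infty}^{d=0}) \subset p^{m-n_1-c_2}\OO_{K_m}^\perp$, so $R_m^\perp(\alpha_i)$ is pinned, up to a bounded discrepancy, to $p^{1-i}R_m^\perp(\alpha_1)$; but $R_m^\perp(\alpha_i) \in p^{-c_2}\OO_{K_m}^\perp$ is bounded independently of $i$, which forces $R_m^\perp(\alpha_1) \in \bigcap_{i \geq 1} p^{i-1-c_2}\OO_{K_m}^\perp = \{0\}$ for every $m$, and likewise $R_{n_1}(\alpha_1) = 0$. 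Hence $\alpha_1 = 0$ and $\omega = d\alpha_1 = 0$, a contradiction.

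The main obstacle is bookkeeping the interaction between the $d$-structure and the trace projectors carefully: one needs that $d$ genuinely commutes with $R_m^\perp$ (clear, since these are $\OO_K$-linear combinations of the $K$-linear trace maps, and $d$ is $\OO_K$-linear), and one needs to convert the statement "$d\alpha_i$ is $p^{i-1}$-divisible" into an honest containment of $\alpha_i$ itself in a small lattice modulo $\OO_{K_\infty}^{d=0}$, which is exactly what the commensurability direction proved in Theorems \ref{fouvar}--\ref{perpdec} supplies. Once the decomposition is in place the vanishing of each component is immediate from the intersection of the $p^j$-adic filtration being zero in the $p$-torsion-free modules $\OO_{K_m}^\perp$. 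An alternative, shorter route — which I would present if it goes through cleanly — is to invoke Proposition \ref{equivtheo} together with Theorem B: if $d(\OO_{K_\infty})$ had a nonzero $p$-divisible element then $d$ would be surjective, hence $\Omega_{\OO_{K_\infty}/\OO_K} \cong K_\infty/\OO_{K_\infty}$ would be a quotient of $\OO_{K_\infty}/\OO_{K_\infty}^{d=0}$, forcing $\OO_{K_\infty}^{d=0}$ to be far from a lattice (it would have infinite $p$-adic covolume), contradicting Theorem B; the delicate point there is making "$\OO_{K_\infty}^{d=0}$ is a lattice" incompatible with "$\OO_{K_\infty}/\OO_{K_\infty}^{d=0}$ surjects onto the divisible module $K_\infty/\OO_{K_\infty}$" precise, since $\OO_{K_\infty}$ is not Noetherian.
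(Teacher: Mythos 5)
Your overall strategy is the paper's: decompose the $\alpha_i$ via the normalized-trace projectors and play the containment $p^{i-1}\alpha_i - \alpha_1 \in \OO_{K_\infty}^{d=0}$ against Corollary \ref{perpdec}. But the final inference is wrong. From $R_m^\perp(p^{i-1}\alpha_i - \alpha_1) \in p^{m-n_1-c_2}\OO_{K_m}^\perp$ and $R_m^\perp(\alpha_i) \in p^{-c_2}\OO_{K_m}^\perp$ you get
\[ R_m^\perp(\alpha_1) \in p^{i-1-c_2}\OO_{K_m}^\perp + p^{m-n_1-c_2}\OO_{K_m}^\perp, \]
and the intersection of these over $i \geq 1$ is $p^{m-n_1-c_2}\OO_{K_m}^\perp$, not $\{0\}$: the ``bounded discrepancy'' term does not shrink with $i$, so you cannot drop it. Indeed the conclusion $\alpha_1 = 0$ is visibly too strong --- take $\alpha_i = 1$ for all $i$; this satisfies all your hypotheses except $\omega \neq 0$, and your argument up to that point never uses $\omega \neq 0$, yet $R_{n_1}(\alpha_1) = 1$. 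What the argument actually yields is $R_m^\perp(\alpha_1) \in p^{m-n_1-c_2}\OO_{K_m}^\perp$ for all $m$ (and similarly for the $K_{n_1}$-component), whence by Corollary \ref{trivinc} only $p^{N}\alpha_1 \in \OO_{K_\infty}^{d=0}$ with $N = n_0+n_1+c_2$, i.e.\ $p^N\omega = 0$ --- not $\omega = 0$.

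The missing step, which is how the paper closes the argument, is to observe that the constant $N$ is uniform in $i$: the same reasoning applied to $\alpha_i$ for any $i$ gives $p^N\alpha_i \in \OO_{K_\infty}^{d=0}$, hence $p^N d\alpha_i = 0$. Taking $i = N+1$ and using $\omega = d\alpha_1 = p^{N} d\alpha_{N+1}$ then gives $\omega = 0$. Your ``alternative shorter route'' via Proposition \ref{equivtheo} is not a proof as stated: you would need to show that $\OO_{K_\infty}/\OO_{K_\infty}^{d=0}$ cannot surject onto the $p$-divisible module $K_\infty/\OO_{K_\infty}$, and since $\OO_{K_\infty}/M$ (with $M = \sum_n p^n\OO_{K_n}$) is a direct sum of torsion pieces whose exponents grow, ruling out $p$-divisible elements in it is essentially the same computation you are trying to avoid; note also that the paper deduces non-surjectivity of $d$ \emph{from} this corollary, so that route risks circularity.
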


\begin{proof}
By props \ref{equivtheo} and \ref{subunif}, we can assume that $K_\infty/K$ is a totally ramified $\Zp$-extension.
Let $\{ \alpha_i \}_{i \geq 1}$ be a sequence of $\OO_{K_\infty}$ such that $d \alpha_i = p \cdot d\alpha_{i+1}$ for all $i \geq 1$. Write $\alpha_i= \sum \alpha_{i,m}$ with $\alpha_{i,m} \in p^{-c_2} \OO_{K_m}^\perp$ for $m \geq n_1+1$ and $\alpha_{i, n_1} \in p^{-c_2}\OO_{K_{n_1}}$. Since $p^k \alpha_{k+i} - \alpha_i \in \OO_{K_\infty}^{d=0}$, coro \ref{perpdec} implies that $p^k \alpha_{k+i,m} - \alpha_{i,m} \in p^{m-n_1-c_2} \OO_{K_m}$ for all $m \geq n_1$. Taking $k \gg 0$ now implies that $\alpha_{i,m} \in p^{m-n_1-c_2} \OO_{K_m}$ for all $m \geq n_1$. Coro \ref{trivinc} gives $p^{n_0+n_1+c_2} \alpha_i \in \OO_{K_\infty}^{d=0}$. Taking $i=n_0+n_1+c_2+1$ gives $d \alpha_1 = 0$.
\end{proof}

\begin{coro}
\label{nosurj}
The differential $d :  \OO_{K_\infty} \to \Omega_{\OO_{K_\infty} / \OO_K}$ is not surjective.
\end{coro}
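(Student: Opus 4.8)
The plan is to derive this as a formal consequence of Prop~\ref{equivtheo} and Coro~\ref{nopdiv}, arguing by contradiction. Suppose $d : \OO_{K_\infty} \to \Omega_{\OO_{K_\infty}/\OO_K}$ is surjective. Since $K_\infty/K$ is deeply ramified, Prop~\ref{equivtheo} applies and tells us that $d(\OO_{K_\infty})$ is $p$-divisible; equivalently, $\Omega_{\OO_{K_\infty}/\OO_K} = K_\infty/\OO_{K_\infty}$ by Coro~\ref{deepramdiff}, and this module is visibly $p$-divisible.

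The one point to pin down is that $d(\OO_{K_\infty})$ is nonzero, so that its $p$-divisibility forces the existence of a \emph{nontrivial} $p$-divisible element and thereby contradicts Coro~\ref{nopdiv}. This is immediate: because $K_\infty/K$ is deeply ramified, there is a finite subextension $F/K$ with $\vp(\frD_{F/K}) > 0$, so $\frD_{F/K} \neq \OO_F$ and $\Omega_{\OO_F/\OO_K} = \OO_F/\frD_{F/K} \neq 0$ by Prop~\ref{fontrec}(1); this module is generated by $d(\OO_F)$ and injects into $\Omega_{\OO_{K_\infty}/\OO_K}$ by Prop~\ref{fontrec}(2), whence $d(\OO_{K_\infty}) \neq 0$. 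Picking any nonzero $\omega$ in it and using surjectivity to pull back $\omega/p$, $\omega/p^2$, etc., produces a nontrivial $p$-divisible element of $d(\OO_{K_\infty})$, contradicting Coro~\ref{nopdiv}.

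There is no real obstacle here: all the substance is in Coro~\ref{nopdiv}, hence ultimately in Theorem~\ref{fouvar} and the ramification estimates of Section~\ref{zpsec}. The only thing worth stating explicitly is the harmless nonvanishing of the image used above; everything else is bookkeeping.
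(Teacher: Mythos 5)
Your proof is correct and follows exactly the paper's route: the paper's proof of Coro~\ref{nosurj} is the one-line deduction from Prop~\ref{equivtheo} and Coro~\ref{nopdiv}. Your additional check that $d(\OO_{K_\infty}) \neq 0$ is a sound (and harmless) piece of bookkeeping that the paper leaves implicit.
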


\begin{proof}
This follows from coro \ref{nopdiv} and prop \ref{equivtheo}.
\end{proof}

\section{The completion of $K_\infty$ in $\bfont_2$}
\label{compsec}

We now prove theorems A and C. Since we are concerned with the completion of $K_\infty$, we can once again replace $K$ with a finite subextension of $K_\infty$ and assume that $K_\infty/K$ is a totally ramified $\Zp$-extension. Let $\hat{K}_\infty^2$ denote the completion of $K_\infty$ in $\bfont_2$, so that $R = \theta(\hat{K}_\infty^2)$ is a subring of $\hat{K}_\infty$. Let $\Gamma = \Gal(K_\infty/K)$, and let $c : \Gamma \to \Zp$ be an isomorphism of $p$-adic Lie groups. Let $w_2$ be the valuation on $K_\infty$ defined by $w_2(x) = \min \{ n \in \ZZ$ such that $p^n x \in \OO_{K_\infty}^{d=0}\}$. The restriction of the natural valuation of $\bfont_2$ to $K_\infty$ is $w_2$ (see \S 1.4 and \S 1.5 of \cite{F94}, or theorem 3.1 of \cite{C12}).

\begin{lemm}
\label{thet}
If $\{x_k\}_{k \geq 1}$ is a sequence of $K_\infty$ that converges to $x \in \bfont_2$ for $w_2$, then $\{x_k\}_{k \geq 1}$ is Cauchy for $\vp$, and $\theta(x) = \lim_{k \to +\infty} x_k$ for the $p$-adic topology.
\end{lemm}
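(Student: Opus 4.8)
The plan is to treat the two assertions separately. The first, Cauchyness for $\vp$, is essentially formal once one records the elementary inequality
\[
w_2(y) \geq -\vp(y) \qquad \text{for all } y \in K_\infty ,
\]
which follows straight from the definition of $w_2$: if $n = w_2(y)$ then $p^n y \in \OO_{K_\infty}^{d=0} \subseteq \OO_{K_\infty}$, hence $\vp(y) \geq -n = -w_2(y)$. This uses only the tautological inclusion $\OO_{K_\infty}^{d=0} \subseteq \OO_{K_\infty}$; in particular no input from Section \ref{lattsec}, and not theorem \ref{fouvar}, is needed.

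For the first assertion, recall that the natural valuation of $\bfont_2$ restricts to $w_2$ on $K_\infty$, and that a sequence tends to $0$ for $w_2$ exactly when $w_2$ of its terms tends to $-\infty$ --- the neighbourhoods of $0$ being the lattices $p^N \OO_{K_\infty}^{d=0}$ for $N \geq 0$ (here $\OO_{K_\infty}^{d=0}$ is stable under multiplication by $\Zp$ because $d$ is $\OO_K$-linear, and e.g.\ $w_2(p^N) = -N$), which lie inside the $p^N \OO_{K_\infty}$. So the hypothesis says that for every $N$ one has $w_2(x_k - x_j) \leq -N$, hence $\vp(x_k - x_j) \geq N$, for all large $k,j$; thus $\{x_k\}$ is Cauchy for $\vp$, and I let $\ell \in \hat{K}_\infty$ be its $p$-adic limit.

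For the second assertion I would use that $\theta : \bfont_2(\Cp) \to \Cp$ is continuous (natural topology on the source, $p$-adic topology on the target) and that it restricts to the inclusion $K_\infty \hookrightarrow \Cp$ on $K_\infty \subset \bfont_2$; both facts are part of the standard description of $\bfont_2$ recalled from \cite{F94} and \cite{C12}. Applying $\theta$ to the convergence $x_k \to x$ in $\bfont_2$ gives $x_k = \theta(x_k) \to \theta(x)$ for the $p$-adic topology, so, comparing with $x_k \to \ell$ and using that $\Cp$ is Hausdorff, $\theta(x) = \ell$.

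I do not anticipate a real obstacle: granting the two facts about $\bfont_2$ recalled just before the lemma --- its natural valuation restricts to $w_2$, and $\theta$ is continuous --- the proof is bookkeeping. The one thing to keep straight is the orientation of the $w_2$-topology: small values of $w_2$ are the ``close'' ones, so Cauchyness for $w_2$ means $w_2$ of the differences tends to $-\infty$, and the inequality $w_2 \geq -\vp$ (equivalently $p^N \OO_{K_\infty}^{d=0} \subseteq p^N \OO_{K_\infty}$) is exactly what turns this into Cauchyness for $\vp$. The reduction is nevertheless substantive: the $w_2$-topology on $K_\infty$ is strictly finer than the $\vp$-topology, as witnessed by the sequence $(p^n \rho_n)_{n \geq 0}$, which tends to $0$ for $\vp$ but not for $w_2$.
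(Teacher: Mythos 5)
Your proof is correct and is exactly the argument the paper has in mind: the lemma is stated there without proof, being treated as immediate from the two facts recalled just before it (the natural valuation of $\bfont_2$ restricts to $w_2$ on $K_\infty$, and $\theta$ is the continuous extension to $\bfont_2$ of the identity on $\Kbar$). Your inequality $\vp(y) \geq -w_2(y)$, i.e.\ the inclusion $p^N \OO_{K_\infty}^{d=0} \subset p^N \OO_{K_\infty}$, is precisely the point, and your care with the orientation of the $w_2$-topology is warranted given the sign convention in the definition of $w_2$.
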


Let $M = \oplus_{n \geq 0} p^n \OO_{K_n}^\perp$. Coro \ref{trivinc} and theo \ref{fouvar} imply that $M$ and $\OO_{K_\infty}^{d=0}$ are commensurable. Hence $\hat{K}_\infty^2$ is the $M$-adic completion of $K_\infty$. Let $w'_2$ be the $M$-adic valuation on $K_\infty$, so that $w'_2$ and $w_2$ are equivalent.

\begin{lemm}
\label{rnw}
If $x \in K_\infty$, then $\vp(R_n^\perp(x)) \geq w'_2(x) +n$.
\end{lemm}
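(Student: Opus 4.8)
The goal is to show $\vp(R_n^\perp(x)) \geq w_2'(x) + n$. By definition, $w_2'(x)$ is the largest integer $j$ such that $x \in p^{-j} M$ (equivalently, $p^j x \in M$), so it suffices to prove the claim when $x \in M$, i.e.\ to show $\vp(R_n^\perp(x)) \geq n$ for $x \in M = \oplus_{m \geq 0} p^m \OO_{K_m}^\perp$; the general statement then follows by multiplying by the appropriate power of $p$ and using $\ZZ$-linearity of $R_n^\perp$. So write $x = \sum_{m \geq 0} x_m$ with $x_m \in p^m \OO_{K_m}^\perp$, the sum being finite.

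The key point is the behavior of $R_n^\perp$ on each graded piece. Recall $R_n^\perp = R_n - R_{n-1}$ (with $R_0^\perp = R_0$), and $K_m^\perp = \ker(R_{m-1}\colon K_m \to K_{m-1})$ for $m \geq 1$. First I would record the orthogonality: if $m \leq n$, then $x_m \in K_m \subset K_n$ is fixed by $\Gal(K_\infty/K_m) \supset \Gal(K_\infty/K_n)$, so $R_n(x_m) = x_m$; and for $m \geq 1$ with $m \leq n$, $R_{n-1}(x_m) = x_m$ as well once $m \leq n-1$, giving $R_n^\perp(x_m) = 0$ unless $m = n$, in which case $R_n^\perp(x_n) = R_n(x_n) - R_{n-1}(x_n) = x_n - 0 = x_n \in p^n \OO_{K_n}^\perp$, so $\vp \geq n$. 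For $m = 0 \leq n$ one checks directly $R_n^\perp(x_0) = 0$ when $n \geq 1$ and $= x_0$ when $n = 0$. For $m > n$: the elements $x_m \in p^m \OO_{K_m}^\perp$, and $R_n^\perp$ maps $\OO_{K_m}$ into $p^{-c_2}\OO_{K_n}$ by prop \ref{rnbdd} (applied to $R_n$ and $R_{n-1}$), hence $R_n^\perp(x_m) \in p^{m - c_2}\OO_{K_n}$; but since $m \geq n+1$, this only gives $\vp(R_n^\perp(x_m)) \geq m - c_2$, which is $\geq n$ only when $m \geq n + c_2$.

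The main obstacle is therefore the range $n < m < n + c_2$, i.e.\ the finitely many "near-diagonal" terms where the crude bound from prop \ref{rnbdd} loses $c_2$ and is not good enough. To handle these, I expect to need a sharper estimate: for $x_m \in \OO_{K_m}^\perp$ (in the kernel of $R_{m-1}$), one should show that $R_n^\perp(x_m)$, for $n$ slightly below $m$, gains back the factor lost by $c_2$ — essentially because iterating the normalized trace across the kernel decay, combined with lemma \ref{gaminv} (the quantitative $(1-g_m)$-invariance bound on $K_m^\perp$), forces $\vp(R_n(y))$ to exceed $\vp(y)$ by a positive amount controlled by the ramification, which over the bounded number of steps $m - n < c_2$ telescopes to at least $n$. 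Concretely I would iterate: $R_n^\perp$ acting on the $K_m^\perp$-component factors through $R_{m-1}^\perp + \cdots + R_{n}^\perp$ applied within $K_m^\perp$, and each $R_j^\perp$ restricted to the part of $K_m^\perp$ it sees contracts valuation by a controlled amount; alternatively, absorb the constant $c_2$ into a harmless shift of the indexing of $M$ (replacing $M$ by a commensurable lattice, which is permitted since only the equivalence class of $w_2'$ matters) so that the near-diagonal terms are automatically in the right filtration piece. Either way the statement is up to the commensurability already established, so I would phrase the proof to exploit that flexibility rather than chase the optimal constant.
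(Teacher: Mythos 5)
There is a genuine gap, and it is exactly at the spot you flag as the ``main obstacle'': that obstacle does not exist, and the machinery you sketch to overcome it is both unnecessary and, as written, not a proof. For $m > n$ the term $R_n^\perp(x_m)$ is not merely small — it is zero. The normalized traces satisfy $R_j = R_j \circ R_{m-1}$ for $j \leq m-1$ (because $\Tr_{K_m/K_j} = \Tr_{K_{m-1}/K_j}\circ\Tr_{K_m/K_{m-1}}$ and the normalizing powers of $p$ match), so for $x_m \in K_m^\perp = \ker(R_{m-1}\colon K_m \to K_{m-1})$ one gets $R_n(x_m) = R_{n-1}(x_m) = 0$ whenever $n \leq m-1$, hence $R_n^\perp(x_m)=0$. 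Combined with the cases $m \leq n$ that you did treat correctly, this shows that $R_n^\perp$ is exactly the projector onto the $n$-th summand of $M = \oplus_{m \geq 0} p^m \OO_{K_m}^\perp$ — which is the whole point of the $\perp$-decomposition and what justifies writing $M$ as a direct sum in the first place. The lemma is then immediate, and this is the paper's one-line proof: if $x \in p^w M$, its $n$-th component $R_n^\perp(x)$ lies in $p^{w+n}\OO_{K_n}^\perp$, so $\vp(R_n^\perp(x)) \geq w+n$; apply this with $w = w_2'(x)$. The crude bound $R_n^\perp(\OO_{K_\infty}) \subset p^{-c_2}\OO_{K_n}$ of prop \ref{rnbdd} is needed to produce the decomposition of a general element, not to estimate the components of an element already written in the direct sum.

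Your two fallback plans do not repair the gap. The ``sharper estimate \dots telescopes to at least $n$'' paragraph asserts a quantitative gain from lemma \ref{gaminv} without establishing any inequality, so it is a hope rather than an argument. Replacing $M$ by a commensurable lattice changes $w_2'$ only up to a bounded additive constant, so at best it would yield $\vp(R_n^\perp(x)) \geq w_2'(x) + n - C$, which is not the statement of the lemma (even if such a weakening would still feed into prop \ref{rnk2}). Finally, a small slip in your reduction: $w_2'(x)$ should be the largest $j$ with $x \in p^{j}M$ (equivalently $p^{-j}x \in M$), not $x \in p^{-j}M$; as written your quantity would be $+\infty$ for every $x \in M$.
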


\begin{proof}
Write $x = \sum_{n \geq 0} R_n^\perp(x)$. If $x \in p^w M$, then  $R_n^\perp(x) \in p^{n+w} \OO_{K_n}$.
\end{proof}

\begin{prop}
\label{rnk2}
Every element $x \in \hat{K}_\infty^2$ can be written in one and only one way as $\sum_{n \geq 0} x_n^\perp$ where $x_n^\perp \in K_n^\perp$ and $p^{-n} x_n^\perp \to 0$ for $\vp$.
\end{prop}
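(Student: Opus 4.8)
The plan is to produce the decomposition from the direct-sum structure of the lattice $M = \oplus_{n \geq 0} p^n \OO_{K_n}^\perp$, together with the operators $R_n^\perp$, and then verify that everything passes to the $M$-adic completion $\hat{K}_\infty^2$. First I would treat existence and the convergence condition. Given $x \in \hat{K}_\infty^2$, pick a sequence $\{y_k\}$ in $K_\infty$ with $y_k \to x$ for $w'_2$. Each $y_k$ lies in some $K_{N_k}$, so $y_k = \sum_{n=0}^{N_k} R_n^\perp(y_k)$ is a genuine finite sum with $R_n^\perp(y_k) \in K_n^\perp$. Lemma \ref{rnw} shows $\vp(R_n^\perp(y_k - y_{k'})) \geq w'_2(y_k - y_{k'}) + n$, so for each fixed $n$ the sequence $\{R_n^\perp(y_k)\}_k$ is Cauchy for $\vp$ and converges to some $x_n^\perp \in K_n^\perp$ (the space $K_n^\perp$ is finite-dimensional over $K$, hence $p$-adically complete and closed). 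Moreover $\vp(R_n^\perp(y_k)) \geq w'_2(y_k) + n$ passes to the limit, giving $\vp(x_n^\perp) \geq w'_2(x) + n$; in particular $p^{-n} x_n^\perp \to 0$ for $\vp$. To see that $x = \sum_{n \geq 0} x_n^\perp$, note that $y_k - \sum_{n=0}^{m} x_n^\perp = \sum_{n > m} R_n^\perp(y_k) + \sum_{n=0}^{m}(R_n^\perp(y_k) - x_n^\perp)$; the first part lies in $p^{w'_2(y_k)}M$ by Lemma \ref{rnw}, and for $k \gg 0$ the second part is small in $\vp$ and, being a finite sum of elements of the $K_n^\perp$ with controlled valuation, small in $w'_2$ as well, so the partial sums $\sum_{n=0}^{m} x_n^\perp$ converge to $x$ in $\hat{K}_\infty^2$.

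For uniqueness, suppose $\sum_{n \geq 0} x_n^\perp = 0$ in $\hat{K}_\infty^2$ with $x_n^\perp \in K_n^\perp$ and $p^{-n}x_n^\perp \to 0$ for $\vp$. By Lemma \ref{thet}, convergence for $w'_2 \sim w_2$ implies $\theta$ of the sum equals the $p$-adic limit of the partial sums, but here it is cleaner to apply $R_m^\perp$ directly: $R_m^\perp$ is continuous for $\vp$ (Prop \ref{rnbdd}), hence continuous for the $M$-adic topology on the relevant completions, and $R_m^\perp$ kills $K_n^\perp$ for $n \neq m$ while fixing $K_m^\perp$ (this is the orthogonality built into the definition $R_n^\perp = R_n - R_{n-1}$ together with the fact that $R_j$ is the identity on $K_i$ for $i \leq j$). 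Applying $R_m^\perp$ term by term to the convergent sum yields $x_m^\perp = 0$ for every $m$.

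The main obstacle is making the term-by-term manipulations legitimate in $\hat{K}_\infty^2$ rather than merely in $K_\infty$: I need that $R_n^\perp$ extends continuously to the completion and that an $M$-adically convergent series may be reindexed and have $R_n^\perp$ applied term by term. The key point is Lemma \ref{rnw}, which says precisely that the "tail" $\sum_{n > m} R_n^\perp$ of an element of $p^w M$ still lies in $p^w M$ (indeed in $\oplus_{n > m} p^{n+w}\OO_{K_n}^\perp$), so truncation is $M$-adically contractive and $R_n^\perp$ is $M$-adically continuous; combined with the completeness of each finite-dimensional $K_n^\perp$ for $\vp$, this is enough to push all the finite-level identities to the limit. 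One should also record that the condition "$p^{-n}x_n^\perp \to 0$ for $\vp$" is exactly equivalent to "$x_n^\perp \to 0$ for $w'_2$", which is what guarantees the series $\sum_n x_n^\perp$ converges $M$-adically in the first place, so the stated normalization is both necessary and sufficient.
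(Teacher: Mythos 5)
Your overall strategy coincides with the paper's: use Lemma \ref{rnw} to see that each $R_n^\perp$ is uniformly continuous from $(K_\infty, w'_2)$ to $(K_n^\perp,\vp)$, extend it to $\hat{K}_\infty^2$, set $x_n^\perp = R_n^\perp(x)$ (as the $\vp$-limit of the $R_n^\perp(y_k)$), and obtain uniqueness because any admissible expansion must satisfy $x_n^\perp = R_n^\perp(x)$. The uniqueness paragraph and the verification that the partial sums converge to $x$ are essentially correct (though for the tail $\sum_{n>m}R_n^\perp(y_k)$ the relevant point is not that it lies in $p^{w'_2(y_k)}M$ --- that bound does not shrink --- but that it vanishes outright once $m \geq N_k$).

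There is, however, one step that fails as written, and it is the one carrying the real content of the proposition. You assert that $\vp(x_n^\perp) \geq w'_2(x) + n$ gives ``in particular $p^{-n}x_n^\perp \to 0$''. It does not: that inequality only says $\vp(p^{-n}x_n^\perp) \geq w'_2(x)$, a lower bound independent of $n$, whereas $p^{-n}x_n^\perp \to 0$ means $\vp(p^{-n}x_n^\perp) \to +\infty$. This decay is exactly what makes $\sum_n x_n^\perp$ converge for $w_2$ (since $p^{v}\OO_{K_n}^\perp = p^{v-n}\cdot p^n\OO_{K_n}^\perp \subset p^{v-n}M$, one has $w'_2(x_n^\perp)\to\infty$ precisely when $\vp(x_n^\perp)-n\to\infty$), so it cannot be replaced by mere boundedness. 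The correct deduction --- and the one the paper makes --- combines two facts you already have on the table: for each $k$ the sequence $\{p^{-n}R_n^\perp(y_k)\}_{n\geq 0}$ has finite support (because $y_k \in K_{N_k}$), and your Cauchy estimate reads $\vp\bigl(p^{-n}R_n^\perp(y_k - y_{k'})\bigr) \geq w'_2(y_k - y_{k'})$, which is uniform in $n$; hence $\{p^{-n}R_n^\perp(y_k)\}_n$ converges to $\{p^{-n}x_n^\perp\}_n$ uniformly in $n$ as $k \to \infty$, and a uniform limit of finitely supported sequences tends to $0$. With this one-line repair the rest of your argument goes through.
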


\begin{proof}
Note that such a series converges for $w_2$. The map $R_n^\perp : K_\infty \to K_n^\perp$ sends $p^w M \subset K_\infty$ to $p^{w+n} \OO_{K_n}^\perp$, so that it is uniformly continuous for the $w_2$-adic topology, and therefore extends to a continuous map $R_n^\perp : \hat{K}_\infty^2 \to K_n^\perp$.

Let $x \in \hat{K}_\infty^2$ be the $w_2$-adic limit of $\{x_k\}_{k \geq 1}$ with $x_k \in K_\infty$. For a given $k$, the sequence $\{p^{-n} R_n^\perp(x_k)\}_{n \geq 0} \in \prod_{n \geq 0} K_n^\perp$ has finite support. As $k \to +\infty$, these sequences converge uniformly in $\prod_{n \geq 0} K_n^\perp$ to $\{p^{-n} R_n^\perp(x)\}_{n \geq 0}$, so that $p^{-n} R_n^\perp(x) \to 0$ as $n \to +\infty$. Hence $\sum_{n \geq 0} R_n^\perp(x)$ converges for $w_2$. Since $x_k = \sum_{n \geq 0} R_n^\perp(x_k)$ for all $k$, we have $x = \sum_{n \geq 0} R_n^\perp(x)$. Finally, if $x = \sum_{n \geq 0} x_n^\perp$ with $x_n^\perp \in K_n^\perp$ and $p^{-n} x_n^\perp \to 0$ for $\vp$, then $x_n^\perp = R_n^\perp(x)$ which proves unicity.
\end{proof}

\begin{coro}
\label{thetinj}
The map $\theta : \hat{K}_\infty^2 \to \hat{K}_\infty$ is injective.
\end{coro}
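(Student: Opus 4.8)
The plan is to reduce the injectivity of $\theta$ to the identity
\[ R_n^\perp(x) = R_n^\perp(\theta(x)), \qquad x \in \hat{K}_\infty^2,\ n \geq 0, \]
where on the left $R_n^\perp \colon \hat{K}_\infty^2 \to K_n^\perp$ is the continuous extension constructed in the proof of prop \ref{rnk2}, and on the right $R_n^\perp \colon \hat{K}_\infty \to K_n^\perp$ is the continuous extension of the normalized-trace projection for the $p$-adic topology. Granting this, suppose $\theta(x) = 0$; then $R_n^\perp(x) = R_n^\perp(0) = 0$ for every $n$, and prop \ref{rnk2} (which writes $x$ uniquely as $\sum_{n \geq 0} R_n^\perp(x)$) forces $x = 0$.

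Both extensions of $R_n^\perp$ exist for easy reasons: prop \ref{rnbdd} gives $\vp(R_n^\perp(y)) \geq \vp(y) - c_2$ for $y \in K_\infty$, so $R_n^\perp$ is $\vp$-Lipschitz and extends to $\hat{K}_\infty$; and the proof of prop \ref{rnk2} shows that $R_n^\perp$ sends $p^w M$ into $p^{w+n}\OO_{K_n}^\perp$, so it is continuous from the $w_2$-topology to the $p$-adic topology and extends to $\hat{K}_\infty^2$. Next, lemma \ref{thet} says precisely that a $w_2$-Cauchy sequence in $K_\infty$ is $\vp$-Cauchy and that $\theta$ sends its $w_2$-limit to its $\vp$-limit; since $\theta$ is moreover the identity on $K_\infty \subset \bfont_2$, the map $\theta \colon \hat{K}_\infty^2 \to \hat{K}_\infty$ is exactly the continuous extension of the inclusion $K_\infty \hookrightarrow \hat{K}_\infty$ from the $w_2$-topology to the $p$-adic topology. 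Consequently both $x \mapsto R_n^\perp(x)$ and $x \mapsto R_n^\perp(\theta(x))$ are $w_2$-continuous maps $\hat{K}_\infty^2 \to K_n^\perp$ (the latter by composing $\theta$ with the $\vp$-continuous $R_n^\perp$ on $\hat{K}_\infty$), and they agree on the dense subspace $K_\infty$; as $K_n^\perp$ is Hausdorff for the $p$-adic topology, they coincide, which is the identity above.

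I do not anticipate a genuine difficulty: the corollary is essentially a repackaging of the explicit description in prop \ref{rnk2}, and the only point needing a line of care is to keep track of the topology on $K_n^\perp$ (the $p$-adic one) so that ``agreement on a dense subset implies equality'' applies to both a priori different continuous extensions of $R_n^\perp$. If one prefers to avoid invoking two extensions, an equivalent route is: write $x = \sum_{n} x_n^\perp$ via prop \ref{rnk2}, observe that $p^{-n} x_n^\perp \to 0$ forces $\vp(x_n^\perp) \to +\infty$ so that the series also converges in $\hat{K}_\infty$ for $\vp$, use lemma \ref{thet} to get $\theta(x) = \sum_{n} x_n^\perp$ in $\hat{K}_\infty$, and finish with the elementary fact that $R_n^\perp(x_m^\perp) = \delta_{nm}\, x_m^\perp$ on $K_\infty$ (hence the $K_n^\perp$-components of an element of $\hat{K}_\infty$ are unique), so that $\theta(x) = 0$ implies every $x_n^\perp = 0$ and therefore $x = 0$.
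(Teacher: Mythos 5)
Your proposal is correct and is essentially the paper's own argument: the paper also combines prop \ref{rnk2} with lemma \ref{thet} and the $\vp$-continuity of the projections $R_n^\perp$ (via prop \ref{rnbdd}) to conclude that if $\sum_{n\geq 0} x_n^\perp = 0$ in $\hat{K}_\infty$ then every $x_n^\perp = 0$. Your second, ``equivalent route'' is precisely the paper's one-line proof spelled out, and your first route is only a cosmetic repackaging of it.
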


\begin{proof}
If $x_n^\perp \in K_n^\perp$ and $x_n^\perp \to 0$ and $\sum_{n \geq 0} x_n^\perp = 0$ in $\hat{K}_\infty$, then $x_n^\perp=0$ for all $n$.
\end{proof}

\begin{coro}
\label{imcomp}
The ring $R$ is the set of $y \in \hat{K}_\infty$ that can be written as $y=\sum_{n \geq 0} p^n y_n$ with $y_n \in K_n$ and $y_n \to 0$.
\end{coro}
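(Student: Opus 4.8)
The plan is to extract both inclusions from proposition \ref{rnk2} and lemma \ref{thet}, which together describe $\theta$ in terms of the $\perp$-decomposition: if $x \in \hat{K}_\infty^2$ has normal form $x = \sum_{n \geq 0} R_n^\perp(x)$ as in proposition \ref{rnk2}, then its partial sums lie in $K_\infty$ and converge to $x$ for $w_2$, so lemma \ref{thet} gives $\theta(x) = \sum_{n \geq 0} R_n^\perp(x)$, the series now converging for $\vp$. Since $R_n^\perp(x) \in K_n^\perp \subset K_n$ and $p^{-n} R_n^\perp(x) \to 0$ for $\vp$, putting $y_n = p^{-n} R_n^\perp(x) \in K_n$ exhibits $\theta(x) = \sum_{n \geq 0} p^n y_n$ with $y_n \to 0$, so $R$ is contained in the set described.

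For the reverse inclusion I would start from $y = \sum_{n \geq 0} p^n y_n$ with $y_n \in K_n$ and $y_n \to 0$, decompose each layer as $y_n = \sum_{m=0}^{n} R_m^\perp(y_n)$ (as in \S\ref{zpsec}), and regroup by setting $x_m^\perp = \sum_{n \geq m} p^n R_m^\perp(y_n)$. Proposition \ref{rnbdd} gives $\vp(R_m^\perp(y_n)) \geq \vp(y_n) - c_2$, so this series converges for $\vp$, its sum lies in the closed $\Qp$-subspace $K_m^\perp$ of $K_m$, and $\vp(x_m^\perp) \geq m + \inf_{n \geq m} \vp(y_n) - c_2$, whence $p^{-m} x_m^\perp \to 0$ for $\vp$. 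By proposition \ref{rnk2} there is then a unique $\tilde{y} \in \hat{K}_\infty^2$ with $R_m^\perp(\tilde{y}) = x_m^\perp$ for all $m$, whose partial sums $\sum_{m \leq M} x_m^\perp$ converge to $\tilde{y}$ for $w_2$; lemma \ref{thet} gives $\theta(\tilde{y}) = \sum_{m \geq 0} x_m^\perp$ for $\vp$. A Fubini-type interchange in the double series $\sum_{0 \leq m \leq n} p^n R_m^\perp(y_n)$ --- legitimate because $\inf_n \vp(y_n) > -\infty$ and $n + \vp(y_n) \to +\infty$, so only finitely many terms have $\vp$ below any given bound --- then gives $\theta(\tilde{y}) = \sum_{n \geq 0} p^n \sum_{m=0}^{n} R_m^\perp(y_n) = \sum_{n \geq 0} p^n y_n = y$, so $y \in R$.

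The argument is essentially bookkeeping, and the only points needing care are keeping track of which series converge for $w_2$ and which for $\vp$, and invoking lemma \ref{thet} exactly at each passage between the two topologies; the one computation deserving explicit justification is the double-series interchange. Alternatively, one can avoid that interchange by checking directly that $w_2(p^n y_n) \to +\infty$, using the inclusion $p^n \OO_{K_n} \subset p^{-c_2} M$ for $n$ large extracted from the proof of corollary \ref{perpdec}, so that $\sum_n p^n y_n$ converges for $w_2$ to some $\tilde{y} \in \hat{K}_\infty^2$, and then applying lemma \ref{thet}. Either way, I do not anticipate a real obstacle: once proposition \ref{rnk2} is in hand, the corollary is a dictionary between the $\perp$-expansion of an element of $\hat{K}_\infty^2$ and the naive expansion $\sum p^n y_n$ of its image under $\theta$.
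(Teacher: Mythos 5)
Your proof is correct, and it uses exactly the ingredients the paper intends (the author states this corollary without proof, as an immediate consequence of prop \ref{rnk2} and lemma \ref{thet}): the forward inclusion is the evident reading of prop \ref{rnk2} through $\theta$, and for the reverse inclusion your Fubini regrouping of the $R_m^\perp(y_n)$ works but is a detour --- the one-line alternative you mention at the end (coro \ref{trivinc} gives $w_2(p^n y_n) \geq \lfloor \vp(y_n)\rfloor - n_0 \to +\infty$, so $\sum_n p^n y_n$ already converges in $\hat{K}_\infty^2$ and lemma \ref{thet} identifies its image under $\theta$ with $y$) is surely the intended argument.
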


\begin{prop}
\label{diffvec}
The ring $R$ is a field, and $R = \{ x \in \hat{K}_\infty$ such that $g(x)-x = \smallo(c(g))$ as $g \to 1$ in $\Gamma \}$.
\end{prop}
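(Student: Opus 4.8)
The plan is to first identify $R$ explicitly via Corollary \ref{imcomp} and Proposition \ref{rnk2}, and then translate the two defining conditions — being a $w_2$-limit of elements of $K_\infty$, and being $C^1$ with zero derivative for $\Gamma$ — into the same condition on the components $x_n^\perp = R_n^\perp(x)$. By Corollary \ref{imcomp}, an element $y \in \hat K_\infty$ lies in $R$ iff $y = \sum_{n\geq 0} p^n y_n$ with $y_n \in K_n$, $y_n \to 0$; equivalently, by Proposition \ref{rnk2} applied on the $\theta$-side, iff $y = \sum_{n \geq 0} y_n^\perp$ with $y_n^\perp \in K_n^\perp$ and $p^{-n} y_n^\perp \to 0$ for $\vp$. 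So the first step is purely bookkeeping: rephrase $R$ as this space of rapidly-decaying perpendicular series inside $\hat K_\infty$.

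The second step is to compute $(g-1)$ on such a series. Since $g_n$ is a topological generator of $\Gal(K_\infty/K_n)$ and $c(\Gamma) = \Zp$, for $g$ close to $1$ with $\vp(c(g)) = m$ large, $g$ acts trivially on $K_m$, hence on each $K_n^\perp$ with $n \leq m$. Thus $(g-1)(y) = \sum_{n > m}(g-1)(y_n^\perp)$, and each term is bounded below in valuation using Lemma \ref{gaminv}: $\vp((g-1)(y_n^\perp)) \geq \vp(y_n^\perp) - c_3 \geq n - c_3 + (\text{decay of } p^{-n}y_n^\perp)$. Feeding in $p^{-n}y_n^\perp \to 0$, the tail with $n > m$ has valuation $\to +\infty$ faster than any linear function of $m$, in particular $\vp((g-1)(y)) - \vp(c(g)) \to +\infty$; so $(g-1)(y) = \smallo(c(g))$. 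This gives the inclusion $R \subseteq \{x : g(x)-x = \smallo(c(g))\}$, and incidentally that these elements are $C^1$ with zero derivative.

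Conversely, suppose $x \in \hat K_\infty$ (a priori only that $\hat K_\infty$ is the $p$-adic completion, so $x = \sum_{n\geq 0} R_n^\perp(x)$ with $R_n^\perp(x) \to 0$ for $\vp$ by Proposition \ref{rnbdd} and Corollary \ref{decper}) satisfies $g(x)-x = \smallo(c(g))$. Applying $R_n^\perp$, which commutes with $g_{m}$ for $m \geq n$ and is continuous, I would test against $g = g_m^{p^j}$: using Lemma \ref{gaminv} in the reverse direction, $\vp(R_n^\perp(x)) \leq \vp((1-g_m)(R_n^\perp(x))) + c_3$ for suitable $m \geq n$, and the hypothesis forces $\vp((1-g)(x))$, hence $\vp((1-g)R_n^\perp(x))$, to grow faster than $\vp(c(g))$; choosing $c(g)$ of valuation $\approx n$ yields $\vp(R_n^\perp(x)) \geq n - (\text{const}) + \psi(n)$ with $\psi(n) \to \infty$, i.e. $p^{-n}R_n^\perp(x) \to 0$. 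By Proposition \ref{rnk2} this says exactly $x \in \hat K_\infty^2$ on the $\theta$-side, i.e. $x \in R$. Finally, $R$ is a field: it is a subring of $\hat K_\infty$ closed under the differentiability condition, which is plainly stable under multiplication and — since $\hat K_\infty$ is a field and $g(x^{-1}) - x^{-1} = -x^{-1}g(x^{-1})(g(x)-x)$ — under inversion, the extra factor $x^{-1}g(x)^{-1}$ being bounded.

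The main obstacle is the converse direction: extracting, from the single asymptotic estimate $g(x)-x = \smallo(c(g))$, a component-wise decay statement $p^{-n}R_n^\perp(x) \to 0$ that is uniform in $n$. This requires choosing, for each $n$, an element $g$ of $\Gamma$ whose valuation $\vp(c(g))$ is calibrated to $n$ (roughly $\vp(c(g)) \approx n$), and then using that $g$ acts trivially on $K_{n-1}^\perp,\dots,K_0^\perp$ but nontrivially enough on $K_n^\perp$ (via Lemma \ref{gaminv}) to detect $R_n^\perp(x)$ while the contributions of $R_m^\perp(x)$ for $m > n$ are swallowed by their own decay. Making this calibration precise — and checking the error terms from the higher components $m > n$ don't swamp the estimate — is the delicate point; everything else is formal manipulation of the decomposition in Proposition \ref{rnk2}.
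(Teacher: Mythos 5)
Your overall route is the paper's: decompose via $R_n^\perp$, prove the forward inclusion by splitting $y$ into a head fixed by $g$ and a tail of high valuation, and prove the converse by using Tate's lemma \ref{gaminv} to detect each component $R_n^\perp(x)$ from $(1-g)(x)$. The forward direction and the field argument are fine (though note that the bound $\vp((g-1)(y_n^\perp)) \geq \vp(y_n^\perp)$ you need there is just the ultrametric inequality, not lemma \ref{gaminv}, which goes the other way).

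The converse, as written, would fail, for two reasons. First, you test with ``suitable $m \geq n$'': but $g_m$ with $m \geq n$ fixes $K_m \supseteq K_n \supseteq K_n^\perp$, so $(1-g_m)R_n^\perp(x) = 0$ and the test detects nothing. You need $g$ to fix $K_{n-1}$ but act nontrivially on $K_n^\perp$, i.e.\ $m = n-1$ (as your closing paragraph in fact indicates). Second, the inequality you quote, $\vp(R_n^\perp(x)) \leq \vp((1-g_m)(R_n^\perp(x))) + c_3$, is the reversed --- and trivially true, hence useless --- form; the content of lemma \ref{gaminv} is precisely $\vp(z) \geq \vp((1-g_{n-1})(z)) - c_3$ for $z \in K_n^\perp$, which is what converts ``$\vp((1-g)(z))$ is large'' into ``$\vp(z)$ is large''. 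With these fixed, the ``delicate calibration'' you anticipate evaporates: for fixed $n$, write $(1-g_n)(x) = \sum_{k \geq n+1}(1-g_n)(x_k)$ with $x_k = R_k^\perp(x)$; since $R_{n+1}^\perp$ commutes with $g_n$ (the group $\Gamma$ is abelian), applying it isolates the single term $(1-g_n)(x_{n+1})$ exactly --- there is no error from the components $k > n+1$ to control --- at the cost of $c_2$ by prop \ref{rnbdd}. Then lemma \ref{gaminv} gives $\vp(x_{n+1}) \geq \vp((1-g_n)(x)) - c_2 - c_3 \geq n + m - c_2 - c_3$ once $n$ is large, i.e.\ $p^{-n}R_n^\perp(x) \to 0$, and prop \ref{rnk2} together with coro \ref{imcomp} puts $x$ in $R$. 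This is exactly the paper's argument.
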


\begin{proof}
The fact that $R$ is a field results from the second statement, since $g(1/x)-1/x = (x-g(x))/(x g(x))$. 
Take $y=\sum_{n \geq 0} p^n y_n$ with $y_n \in K_n$ and $y_n \to 0$. If $m \geq 1$, then for all $k \gg 0$, we have $y_n \in p^{m+n} \OO_{K_n}$. We can write $y= x_k + \sum_{n \geq k} p^n y_n$ and then $(g-1)(y) \in p^{k+m} \OO_{K_\infty}$ if $g \in \Gal(K_\infty/K_k)$. This proves one implication.

Conversely, take $x \in \hat{K}_\infty$ such that $g(x)-x = \smallo(c(g))$. Write $x= \sum_{k \geq 0} x_k$ with $x_0 = R_0(x) \in K_0$ and $x_k = R_k^\perp(x) \in K_k^\perp$ for all $k \geq 1$. For $n \geq 0$, let $g_n$ denote a topological generator of $\Gal(K_\infty/K_n)$. Take $m \geq 0$ and $n \gg 0$ such that we have $\vp((g_n-1)(x)) \in p^{m+n} \OO_{K_\infty}$. We have $(1-g_n)(x) = \sum_{k \geq n+1} (1-g_n) x_k$, so that by lemma \ref{gaminv} and prop \ref{rnbdd}: $\vp(x_{n+1}) \geq \vp((1-g_n)(x_{n+1})) - c_3 \geq \vp((1-g_n)(x))-c_2-c_3 \geq n+m-c_2-c_3$. 
This implies the result.
\end{proof}

\begin{rema}
\label{diffvecrem}
Prop \ref{diffvec} says that $R$ is the set of vectors of $\hat{K}_\infty$ that are $C^1$ with zero derivative (flat to order $1$) for the action of $\Gamma$.
\end{rema}

Theorem A follows from coro \ref{thetinj} since $\theta : \bfont_2(\hat{K}_\infty) \to \hat{K}_\infty$ is not injective. Finally, coro \ref{thetinj}, coro \ref{imcomp}, and prop \ref{diffvec} imply theorem C.

\providecommand{\bysame}{\leavevmode ---\ }
\providecommand{\og}{``}
\providecommand{\fg}{''}
\providecommand{\smfandname}{\&}
\providecommand{\smfedsname}{\'eds.}
\providecommand{\smfedname}{\'ed.}
\providecommand{\smfmastersthesisname}{M\'emoire}
\providecommand{\smfphdthesisname}{Th\`ese}


\begin{thebibliography}{Win83}

\bibitem[CG96]{CG96}
{\scshape J.~Coates {\normalfont \smfandname} R.~Greenberg} -- {\og Kummer
  theory for abelian varieties over local fields\fg}, \emph{Invent. Math.}
  \textbf{124} (1996), no.~1-3, p.~129--174.

\bibitem[Col12]{C12}
{\scshape P.~Colmez} -- {\og Une construction de
  {$\mathbf{B}_{\mathrm{dR}}^+$}\fg}, \emph{Rend. Semin. Mat. Univ. Padova}
  \textbf{128} (2012), p.~109--130 (2013).

\bibitem[Fon94]{F94}
{\scshape J.-M. Fontaine} -- {\og Le corps des p\'{e}riodes {$p$}-adiques\fg},
  \emph{Ast\'{e}risque} (1994), no.~223, p.~59--111, With an appendix by Pierre
  Colmez, P\'{e}riodes $p$-adiques (Bures-sur-Yvette, 1988).

\bibitem[Fon82]{F82}
\bysame , {\og Formes diff\'{e}rentielles et modules de {T}ate des
  vari\'{e}t\'{e}s ab\'{e}liennes sur les corps locaux\fg}, \emph{Invent.
  Math.} \textbf{65} (1981/82), no.~3, p.~379--409.

\bibitem[Fou05]{F05}
{\scshape L.~Fourquaux} -- {\og Logarithme de {P}errin-{R}iou pour des
  extensions associ\'ees \`a un groupe de {L}ubin-{T}ate\fg},
  \smfphdthesisname, Universit\'e {P}aris 6, 2005.

\bibitem[IZ99]{IZ99}
{\scshape A.~Iovita {\normalfont \smfandname} A.~Zaharescu} -- {\og Galois
  theory of {$B_{\rm dR}^+$}\fg}, \emph{Compositio Math.} \textbf{117} (1999),
  no.~1, p.~1--31.

\bibitem[Pon20]{P20}
{\scshape G.~Ponsinet} -- {\og Universal norms and the {F}argues-{F}ontaine
  curve\fg}, preprint, 2020.

\bibitem[Sch12]{S12}
{\scshape P.~Scholze} -- {\og Perfectoid spaces\fg}, \emph{Publ. Math. Inst.
  Hautes \'{E}tudes Sci.} \textbf{116} (2012), p.~245--313.

\bibitem[Tat67]{T67}
{\scshape J.~T. Tate} -- {\og {$p$}-divisible groups\fg}, in \emph{Proc.
  {C}onf. {L}ocal {F}ields ({D}riebergen, 1966)}, Springer, Berlin, 1967,
  p.~158--183.

\bibitem[Win83]{W83}
{\scshape J.-P. Wintenberger} -- {\og Le corps des normes de certaines
  extensions infinies de corps locaux; applications\fg}, \emph{Ann. Sci.
  \'{E}cole Norm. Sup. (4)} \textbf{16} (1983), no.~1, p.~59--89.

\end{thebibliography}
\end{document}